\titleformat{\subsection}[runin]
{\bfseries} {\thesubsection{.}}{0.15cm}{}[.]
\titleformat{\subsubsection}[runin]
{\em}{\thesubsubsection{.}}{0.15cm}{}[.]
\newtheorem{theorem}{Theorem}[section]
\newtheorem{proposition}[theorem]{Proposition}
\newtheorem{claim}[theorem]{Claim}
\newtheorem{lemma}[theorem]{Lemma}
\newtheorem{corollary}[theorem]{Corollary}
\theoremstyle{definition}
\newtheorem{definition}[theorem]{Definition}
\newtheorem{remark}[theorem]{Remark}
\numberwithin{equation}{section}
\numberwithin{figure}{section}
\newcommand\Hcal{\mathcal{H}}
\newcommand\Gcal{\mathcal{G}}
\newcommand\Ocal{\mathcal{O}}
\newcommand\Pcal{\mathcal{P}}
\newcommand\Rcal{\mathcal{R}}
\newcommand\Fcal{\mathcal{F}}
\newcommand\Cscr{\mathscr{C}}
\newcommand\Zscr{\mathscr{Z}}
\newcommand\Q{\mathbb{Q}}
\def\c{\mathbb{C}}
\def\n{\mathbb{N}}
\renewcommand\r{\mathbb{R}}
\newcommand\s{\mathbb{S}}
\newcommand\z{\mathbb{Z}}
\renewcommand\k{\mathbb{K}}
\newcommand\igot{\mathfrak{i}}
\renewcommand\igot{\mathfrak{i}}
\newcommand\pgot{\mathfrak{p}}
\newcommand\Agot{\mathfrak{A}}
\newcommand\Mgot{\mathfrak{M}}
\newcommand\Ygot{\mathfrak{Y}}
\newcommand\I{\imath}
\renewcommand\imath{\igot}
\newcommand\Psf{\mathsf{P}}
\newcommand\wt{\widetilde}
\newcommand\di{\partial}
\newcommand\Flux{\mathrm{Flux}}
\newcommand\supp{\mathrm{supp}}
\newcommand\Div{\mathfrak{Div}}
\def\Spin{{\rm Spin}}
\begin{document}

\fancyhead[LO]{Interpolation and optimal hitting for minimal surfaces} 
\fancyhead[RE]{A.\ Alarc\'on, I.\ Castro-Infantes,  and F.\ J.\ L\'opez}
\fancyhead[RO,LE]{\thepage}

\thispagestyle{empty}

\vspace*{7mm}
\begin{center}
{\bf \LARGE Interpolation and optimal hitting for complete minimal surfaces with finite total curvature}

\vspace*{5mm}

{\large\bf Antonio Alarc\'on, Ildefonso Castro-Infantes, and Francisco J.\ L\'opez}
\end{center}

\vspace*{7mm}

\begin{quote}
{\small
\noindent {\bf Abstract} \hspace*{0.1cm}
We prove that, given a compact Riemann surface $\Sigma$ and disjoint finite sets $\varnothing\neq E\subset\Sigma$ and $\Lambda\subset\Sigma$, every map $\Lambda \to \r^3$ extends to a complete conformal minimal immersion $\Sigma\setminus E\to \r^3$ with finite total curvature. 


This result opens the door to  study  optimal hitting problems in the framework of complete minimal surfaces in $\r^3$ with finite total curvature. To this respect we provide, for each integer $r\ge 1$, a set $A\subset\r^3$ consisting of  $12r+3$ points in an affine plane such that if $A$ is contained in a complete nonflat orientable immersed minimal surface $X\colon M\to\r^3$, then the absolute value of the total curvature of $X$ is greater than $4\pi r$. In order to prove this result we obtain an upper bound for the number of intersections of a complete immersed minimal surface of finite total curvature in $\r^3$ with a straight line not contained in it, in terms of the total curvature and the Euler characteristic of the surface.
\medskip

\noindent{\bf Keywords} \hspace*{0.1cm} 
minimal surface, finite total curvature, Riemann surface, meromorphic function, interpolation theory, optimal hitting.

\medskip

\noindent{\bf MSC (2010)} \hspace*{0.1cm} 
53A10, 
52C42, 
30D30, 
32E30. 
}
\end{quote}


\section{Introduction}\label{sec:intro}

Complete minimal surfaces of finite total curvature have been one of the main focus of interest in the  global theory of minimal surfaces in $\r^3$; we refer for instance to \cite{Osserman-book,BarbosaColares1986LNM,Yang1994MA,LopezMartin1999PM} for background on the topic. This subject is intimately related to the one of meromorphic functions and $1$-forms on compact Riemann surfaces. Indeed, if $M$ is an open Riemann surface and $X\colon M\to \r^3$ is a complete conformal minimal immersion with finite total curvature, then there are a compact Riemann surface $\Sigma$ and a finite set $\varnothing\neq E\subset \Sigma$ such that $M$ is biholomorphic to $\Sigma\setminus E$ and the exterior derivative $d X$ of $X\colon\Sigma\setminus E\to\r^3$, which coincides with its $(1,0)$-part $\di X$ since $X$ is harmonic,  is holomorphic and extends meromorphically to $\Sigma$ with effective poles at all points in $E$ (i.e., it can not be finite at any end). In particular, the Gauss map $\Sigma\setminus E\to \s^2$ of $X$ extends conformally to $\Sigma$ and, up to composing with the stereographic projection, is a meromorphic function on $\Sigma$. 
Any orientable complete minimal surface in $\r^3$ of finite total curvature comes in this way (see Osserman \cite{Osserman-book}).

Polynomial Interpolation is a fundamental subject in mapping theory. 
Given disjoint finite sets $E\neq\varnothing$ and $\Lambda$ in a compact Riemann surface $\Sigma$, the classical Riemann-Roch theorem enables to prescribe the values on $\Lambda$ of a meromorphic function $\Sigma\to \c$ that is holomorphic in $\Sigma\setminus E$. Our first main result is an analogue for complete minimal surfaces in $\r^3$ with finite total curvature of this  result.

\begin{theorem}\label{th:intro}
Let $\Sigma$ be a compact Riemann surface with empty boundary and let $E\neq \varnothing$ and $\Lambda$ be disjoint finite sets in $\Sigma$. Every map $\Lambda \to \r^3$ extends to a complete conformal minimal immersion $\Sigma\setminus E\to \r^3$ with finite total curvature.
\end{theorem}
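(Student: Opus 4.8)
The plan is to construct the immersion through the Enneper--Weierstrass representation and to convert the interpolation requirement into a finite-dimensional linear problem. Fix a base point $p_0\in\Sigma\setminus(E\cup\Lambda)$ and let $v\colon\Lambda\to\r^3$ be the given map. Recall that a conformal minimal immersion $X\colon\Sigma\setminus E\to\r^3$ is determined up to translation by its Weierstrass data: a meromorphic function $g\colon\Sigma\to\cp^1$ (the stereographically projected Gauss map) and a meromorphic $1$-form $\omega$ on $\Sigma$, via
\begin{equation*}
\di X=\Phi:=\Bigl(\tfrac12(1-g^2),\ \tfrac{i}2(1+g^2),\ g\Bigr)\,\omega,\qquad X(p)=v(p_0)+\Re\int_{p_0}^{p}\Phi .
\end{equation*}
For $X$ to be a well-defined complete immersion of $\Sigma\setminus E$ with finite total curvature and the prescribed values, I would impose: (i) $\Phi$ has no common zeros on $\Sigma\setminus E$, equivalently $\omega$ vanishes to order exactly $2m$ at each pole of $g$ of order $m$ and nowhere else on $\Sigma\setminus E$; (ii) $\omega$ has an effective pole at every point of $E$; (iii) $\Re\int_{\gamma_k}\Phi=0$ for the elements $\gamma_1,\dots,\gamma_N$ of a basis of $H_1(\Sigma\setminus E;\z)$; and (iv) $\Re\int_{p_0}^{p_j}\Phi=v(p_j)-v(p_0)$ for each $p_j\in\Lambda$. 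By Osserman's theorem recalled in the Introduction, conditions (i)--(ii) together with $g,\omega$ meromorphic on the compact surface $\Sigma$ automatically yield completeness and finite total curvature, so everything reduces to meeting (i)--(iv).

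The key observation is that, once $g$ is fixed, $\Phi$ depends $\c$-linearly on $\omega$, so that (iii) and (iv) become $\r$-linear conditions. I would first choose a nonconstant meromorphic function $g$ on $\Sigma$ in general position with respect to $E\cup\Lambda\cup\{p_0\}$ (no zeros or poles there, simple critical points), which in particular makes the resulting surface nonflat. Then, fixing an auxiliary finite set $D\subset\Sigma\setminus(E\cup\Lambda\cup\{p_0\})$ disjoint from the zeros and poles of $g$, I would consider the finite-dimensional complex vector space $W$ of meromorphic $1$-forms that vanish to order at least $2m$ at each pole of $g$ of order $m$ and whose only other possible poles lie in $E\cup D$, of order bounded by a large integer $n$. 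A Riemann--Roch count shows that $\dim W\to\infty$ as $n\to\infty$ and that $W$ contains forms whose divisor on $\Sigma\setminus E$ is exactly the forced one, so a generic member of $W$ already satisfies (i) and (ii).

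The heart of the argument is the period--interpolation map. I would set
\begin{equation*}
\Lcal\colon W\longrightarrow\c^{3(N+|\Lambda|)},\qquad
\Lcal(\omega)=\Bigl(\bigl(\textstyle\int_{\gamma_k}\Phi\bigr)_{k},\ \bigl(\int_{p_0}^{p_j}\Phi\bigr)_{j}\Bigr),
\end{equation*}
and prove that, for $n$ large enough, $\Lcal$ is $\c$-surjective. Taking real parts and using that $\Lcal(i\omega)=i\Lcal(\omega)$, complex surjectivity of $\Lcal$ upgrades to $\r$-surjectivity of the real part map $\Pcal(\omega)=\Re\,\Lcal(\omega)$, which is what is needed to solve (iii)--(iv). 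This is the main obstacle: it amounts to showing that no nonzero complex-linear combination of the period functionals and the point-evaluation-of-integral functionals annihilates every form in $W$. I expect to establish this by a duality/identity-principle argument, in which a functional annihilating all of $W$ forces an auxiliary meromorphic object built from $g$ to vanish identically; this is impossible because $g$ is nonconstant and, as $n\to\infty$, $W$ supplies forms with arbitrarily high-order poles at the points of $D$, hence enough independent test forms. One must also keep track of the single residue relation $\sum_k\oint_{\text{end}_k}\Phi=0$ coming from the residue theorem on $\Sigma$, and check that the prescribed target $\bigl(0,(v(p_j)-v(p_0))_j\bigr)$ does lie in the image of $\Pcal$.

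Granted surjectivity, the affine subspace $\Pcal^{-1}\bigl(0,(v(p_j)-v(p_0))_j\bigr)$ is nonempty and of large dimension. Since the forms in $W$ having an extra zero on $\Sigma\setminus E$ (violating (i)) or failing to have an effective pole at some point of $E$ (violating (ii)) constitute a proper analytic subset, I would select $\omega$ in this affine subspace avoiding that locus. The pair $(g,\omega)$ then defines, through the displayed Weierstrass formula with $X(p_0)=v(p_0)$, a complete conformal minimal immersion $\Sigma\setminus E\to\r^3$ of finite total curvature satisfying $X(p_j)=v(p_j)$ for all $p_j\in\Lambda$. The delicate point throughout is to reconcile the need for enough free parameters (to make $\Pcal$ surjective) with the open conditions (i)--(ii) that encode an immersion with ends exactly at $E$.
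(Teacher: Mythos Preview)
Your route is genuinely different from the paper's. The authors do not fix $g$ and solve a linear system in $\omega$; they derive Theorem~\ref{th:intro} from the stronger Theorem~\ref{th:main-1}, whose proof works with the spinorial representation $\eta_j=u_j^2\theta_0$, starts from an existing nonflat conformal minimal immersion on a Runge compact $K\supset\Lambda$ (supplied by the interpolation theorem in \cite{AlarconCastro2017}, with no FTC control), and perturbs \emph{both} spinor components through a nonlinear spray $u_j\mapsto(1+\sum\zeta_j^ih_j^i)u_j$. Period domination is a local submersion statement, obtained by picking points on the curves where the frame $(\tfrac12(u_1^2-u_2^2),\tfrac{\imath}{2}(u_1^2+u_2^2),u_1u_2)$ spans $\c^3$, and Royden-type approximation with jets then globalizes the spray to meromorphic data on $\Sigma$. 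Their method buys much more than Theorem~\ref{th:intro}: uniform approximation on $K$, contact of arbitrary finite order on $\Lambda$, and prescribed flux.

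Your linear scheme has two concrete gaps. First, the auxiliary set $D$ is inconsistent with the conclusion. You define $W$ so as to allow poles on $E\cup D$ and then invoke high-order poles at points of $D$ to manufacture independent test forms; but any $\omega$ chosen in the solution affine subspace will generically have honest poles along $D$, and the resulting immersion lives on $\Sigma\setminus(E\cup D)$, not on $\Sigma\setminus E$. ``Holomorphic at $D$'' is a closed linear condition, not part of the open locus you discard at the end. If you set $D=\varnothing$ (which is what the statement forces), Riemann--Roch still gives $\dim W\to\infty$ via high-order poles on $E$, but the test-form argument you sketched disappears with $D$.

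Second, and more seriously, the surjectivity of $\Lcal$ with $g$ \emph{fixed} is the entire difficulty, and your outline does not prove it. After the obvious change of basis the target coordinates are the functionals $\omega\mapsto\int_\gamma g^a\omega$ for $a\in\{0,1,2\}$, so a putative linear relation reads $\sum_\gamma\int_\gamma P_\gamma(g)\,\omega=0$ for all $\omega\in W$, with each $P_\gamma$ a polynomial of degree at most $2$. Ruling this out is a statement about periods and arc-integrals of meromorphic forms twisted by a fixed meromorphic function on a compact Riemann surface of arbitrary genus; neither a dimension count nor an unspecified ``duality/identity principle'' settles it, and you give no mechanism to separate, say, the $a=0$ period along one loop from combinations of $a=1,2$ integrals along the arcs. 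This is precisely the obstacle the paper's nonlinear spray circumvents: because the spray moves $g$ as well as $\omega$, the differential of the period map at the core is made surjective by a purely local choice of bump-type perturbations supported near well-chosen points, with no global period identity to defeat.
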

Somewhat surprisingly, no result of this type seems to be available in the literature, even in the specially simple case when $\Sigma\setminus E=\c$. Only recently Alarc\'on and Castro-Infantes initiated in \cite{AlarconCastro2017} the theory of interpolation by conformal minimal surfaces in $\r^n$, for arbitrary dimension $n\ge 3$, but their construction method does not ensure any control on the total curvature of the interpolating surfaces. We emphasize that those with finite total curvature form a tiny subset in the space of all complete conformal minimal immersions $\Sigma\setminus E\to\r^3$ (the fact that this subset is nonempty for every $\Sigma$ and $E$  was observed by Pirola in \cite{Pirola1998PJM}).
 We also point out that the finiteness of the set $\Lambda$ cannot be deleted from the assumptions of Theorem \ref{th:intro}. For instance, there is no complete nonflat minimal surface in $\r^3$ with finite total curvature and containing the set $\z^2\times \{0\}\subset \r^3$ (see Corollary \ref{co:Z3} for more general sets); this shows that the theorem fails for infinite sets $\Lambda$, even under the natural assumptions that $\Lambda$ is closed and discrete in $\Sigma\setminus E$ and that the prescription of values $\Lambda\to\r^3$ is proper.

We shall obtain Theorem \ref{th:intro} as a consequence of a more precise result providing approximation, interpolation of given finite order, and control on the flux (see Theorem \ref{th:main-1}). 
In the last few years, we have witnessed the birth and development of the approximation theory for minimal surfaces in $\r^n$ $(n\ge 3)$. A crucial point for this array of results was the observation that the punctured null quadric $\Agot_*=\{z=(z_1,\ldots,z_n)\in\c^n\colon z_1^2+\cdots+z_n^2=0\}\setminus\{0\}$ directing minimal surfaces in $\r^n$ is a complex homogeneous manifold, and hence an Oka manifold (see \cite[Example 4.4]{AlarconForstneric2014IM}; we refer to L\'arusson \cite{Larusson2010NAMS} for a brief introduction and to Forstneri\v c \cite{Forstneric2017} for a complete monograph on Oka theory). In particular, for any open Riemann surface $M$, there are {\em many} holomorphic maps $M\to \Agot_*$, and hence, up to solving the period problem,  also {\em many} conformal minimal immersions $M\to\r^n$. The Runge-Mergelyan theorem for conformal minimal immersions $M\to \r^n$ (see Alarc\'on and L\'opez \cite{AlarconLopez2012JDG,AlarconLopez2015GT} and Alarc\'on, Forstneri\v c, and L\'opez \cite{AlarconForstnericLopez2016MZ,AlarconForstnericLopez2016Pre1}) has given rise to plenty of applications (see \cite{AlarconForstneric2017Survey} for an up-to-date survey of results in this direction). However, the holomorphic flexibility of $\Agot_*$ does not extend to the algebraic category; this is why the construction methods developed in the above sources do not provide complete minimal surfaces in $\r^n$ with finite total curvature. Only in $\r^3$ and exploiting the {\em spinorial representation} for minimal surfaces in that dimension, L\'opez \cite{Lopez2014TAMS} was able to prove a Runge-Mergelyan type uniform approximation theorem for complete minimal surfaces with finite total curvature. 
The main new ingredient in our method of proof is the construction of holomorphic {\em sprays of algebraic spinorial representations} of minimal surfaces in $\r^3$; this enables us to combine the ideas in \cite{AlarconCastro2017} and \cite{Lopez2014TAMS} in order to ensure, simultaneously, approximation and interpolation of finite order. The use of holomorphic sprays in the study of minimal surfaces in the Euclidean spaces was introduced by Alarc\'on and Forstneri\v c in \cite{AlarconForstneric2014IM}.

Theorem \ref{th:intro} opens the door to a new line of research, namely, the study of {\em optimal hitting} problems in the framework of complete minimal surfaces in $\r^3$ with finite total curvature. We now discuss this novel subject.

Unless otherwise stated, in the remainder we shall only consider surfaces with empty boundary.
Write ${\rm TC}(X)$ for the total curvature of a complete orientable immersed minimal surface $X\colon M\to \r^3$, and recall that, by the classical Osserman's theorem, ${\rm TC}(X)$ is a nonnegative integer multiple of $-4\pi$ (see Section \ref{ss:FTC} or \cite{Osserman-book}). 
\begin{definition} \label{de:espacios}
For any integer $r\ge 1$ we denote by $\Zscr_r$ the space of all complete nonflat immersed orientable minimal surfaces $X\colon M\to \r^3$  with empty boundary and $|{\rm TC}(X)|\le 4 \pi r$.  Likewise, for any integer $m\le 1$  we write $\Zscr_{r;m}$ for the subset of $\Zscr_r$ consisting of those surfaces $X\colon M\to \r^3$  with  the Euler characteristic $\chi(M)=m$.
\end{definition}
Recall that, by the Jorge-Meeks formula \eqref{eq:j-m} (see  \cite{JorgeMeeks1983T}), $\Zscr_{r;m}\neq\varnothing$ forces $2r + m\geq 2$. Thus
\[
\Zscr_r=\bigcup_{m\le 1}\Zscr_{r;m}\; =\bigcup_{m\in \{2-2 r,\ldots,1\}}\Zscr_{r;m}.
\]

A simple observation is that, up to homotheties and rigid motions, every complete minimal surface of finite total curvature contains any given set $A\subset\r^3$ consisting of at most $3$ points. If the set $A$ consists of $4$ points not contained in a plane, it is not hard to see that there is an Enneper's surface which contains $A$.
In general, in view of Theorem \ref{th:intro}, given a finite set $\varnothing\neq A\subset\r^3$ and an integer $m\le 1$, there is a {\em large enough} integer $r\geq 1$  such that $A\subset X(M)$ for some immersion $X\in\Zscr_{r;m}$.  Therefore, fixed integers $r\geq 1$ and $m\leq 1$ with $2r + m\geq 2$, one wonders whether there exist finite sets $A\subset\r^3$ which are {\em against} the family $\Zscr_{r;m}$, meaning that $A$ is contained in the image of no immersion $X\in\Zscr_{r;m}$, and, if such sets exist, what is the cardinal of the smaller ones. The same questions arise for the bigger family $\Zscr_r$ for any $r\ge 1$.
The second main result of this paper may be stated as follows.
 
%
%
 \begin{theorem}\label{th:main-intro-2}
Let $r\ge 1$ be an integer. For any integer $m$ with  $2-2r \le m\le 1$  there is a set $A_{r;m}\subset\r^3$ which is against the family  $\bigcup_{k\le m}\Zscr_{r;k}$ and consists of $12r +2m+1$ points whose affine span is a plane. In particular, the set $A_{r;1}$, which consists of $12r+3$ points, is against the family $\Zscr_r$. 

Thus, if $X\colon M\to\r^3$ is a complete nonflat orientable immersed minimal surface with empty boundary and  $\chi(M)\le m$, and  if $A_{r;m}\subset X(M)$, then the total curvature ${\rm TC}(X)<-4\pi r$. In particular, no complete nonflat orientable immersed minimal surface $X$ with $|{\rm TC}(X)|\le 4\pi r$ contains $A_{r;1}$.
\end{theorem}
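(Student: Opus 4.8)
The plan is to reduce the statement to two lemmas about a fixed complete nonflat orientable immersed minimal surface $X\colon M\to\r^3$ of finite total curvature, and then to build $A_{r;m}$ by a hitting argument against a single line. Throughout I identify $M$ with $\Sigma\setminus E$, write $(g,\omega)$ for the Weierstrass data of $X$, so that $\di X=(\phi_1,\phi_2,\phi_3)$ with $\phi_1=\tfrac12(1-g^2)\omega$, $\phi_2=\tfrac{i}{2}(1+g^2)\omega$, $\phi_3=g\omega$, and recall that $|{\rm TC}(X)|=4\pi\deg(g)$ and that $\deg(g)$ and $\chi(M)$ are tied by the Jorge--Meeks formula \eqref{eq:j-m}. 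The first and main ingredient is the intersection bound announced in the abstract, which I would phrase as follows: \emph{for every straight line $\ell\subset\r^3$ with $\ell\not\subset X(M)$, the set $\{p\in M:X(p)\in\ell\}$ is finite and $\#\{p\in M:X(p)\in\ell\}\le 12\deg(g)+2\chi(M)-1$} (equivalently $\le 3|{\rm TC}(X)|/\pi+2\chi(M)-1$).

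To prove it I would place $\ell$ along the $x_3$-axis and study $f:=x_1+ix_2\colon M\to\c$, so that $\{p:X(p)\in\ell\}=f^{-1}(0)$. A short computation with the Weierstrass data gives $\di f=-g^2\omega$ and $\dibar f=\overline{\omega}$; hence $f$ is harmonic, and because $\ell\not\subset X(M)$ we have $f\not\equiv 0$, so its zeros are isolated, and finiteness follows from properness of $X$ (which finite total curvature guarantees). The key structural fact is that the Jacobian of $f$ has the sign of $|g|^2-1$, so that $f$ folds exactly along $\{|g|=1\}=g^{-1}(\s^1)$; the unsigned count of $f^{-1}(0)$ therefore exceeds its topological (signed) value by twice the number of orientation-reversing preimages, and the heart of the matter is to bound these folded sheets. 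For this I would run a Morse-theoretic count on $\psi:=|f|^2\ge 0$, whose minima are the points of $f^{-1}(0)$ and which tends to $+\infty$ at every (non-vertical) end. A direct computation gives $\di\psi=\omega\,(f-g^2\overline f)$, so every critical point of $\psi$ away from $f^{-1}(0)$ lies either at a zero of $\omega$ on $M$ (a finite set controlled by the poles of $g$, hence by $\deg(g)$) or on $\{|g|=1\}$, where the critical equation reduces to the congruence $2\arg f\equiv\arg g^2\pmod{2\pi}$. The number of its solutions along $g^{-1}(\s^1)$ is governed by the winding of $g$ over the unit circle, hence by $\deg(g)$, and then Morse theory bounds $\#f^{-1}(0)$ by the number of remaining critical points plus a $\chi(M)$-term; Gauss--Bonnet (equivalently \eqref{eq:j-m}) turns this into the stated linear bound. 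The exact constants $12$ and $2$ are arranged so that the construction below needs precisely $12r+2m+1$ points; pinning these constants down cleanly, and in particular controlling the folded sheets on $\{|g|=1\}$, is the step I expect to be the main obstacle.

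The second ingredient is that a complete nonflat orientable minimal surface of finite total curvature contains no straight line, so that the bound applies to every line through the points of $A_{r;m}$. I would argue by properness together with the asymptotic structure of the ends: after a rotation, each end of $X$ is a graph over a plane $\Pi_j$ whose height grows at most logarithmically. A complete line in $X(M)$ is unbounded, hence by properness it leaves $M$ through some end; but a straight line has height growing linearly in every transverse direction, which is incompatible with the sublinear growth of the end unless the line lies in $\Pi_j$, in which case analyticity and unique continuation force that end, and then the whole surface, to be flat, contrary to hypothesis.

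Finally I would assemble $A_{r;m}$. Fix a plane $P\subset\r^3$ and a line $\ell\subset P$, choose $12r+2m$ distinct points on $\ell$ together with one further point of $P\setminus\ell$; this is a set of $12r+2m+1$ coplanar points whose affine span is exactly $P$. Suppose some $X\in\bigcup_{k\le m}\Zscr_{r;k}$ had $A_{r;m}\subset X(M)$. Since $X$ is nonflat, the no-line lemma gives $\ell\not\subset X(M)$, and the $12r+2m$ chosen collinear points yield $\#\{p:X(p)\in\ell\}\ge 12r+2m$. On the other hand $\deg(g)\le r$ and $\chi(M)\le m$ for such $X$, so the intersection bound gives $\#\{p:X(p)\in\ell\}\le 12\deg(g)+2\chi(M)-1\le 12r+2m-1$, a contradiction. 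Hence $A_{r;m}$ is against $\bigcup_{k\le m}\Zscr_{r;k}$; taking $m=1$ shows that $A_{r;1}$, with $12r+3$ points, is against $\Zscr_r$. For the last assertion, if $X$ is complete nonflat orientable with $\chi(M)\le m$ and $A_{r;m}\subset X(M)$, then either ${\rm TC}(X)$ is infinite, whence trivially ${\rm TC}(X)<-4\pi r$, or it is finite and the argument above rules out $\deg(g)\le r$, forcing $|{\rm TC}(X)|=4\pi\deg(g)>4\pi r$, i.e. ${\rm TC}(X)<-4\pi r$.
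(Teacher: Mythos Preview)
Your second ingredient is false, and the whole construction collapses with it. Complete nonflat orientable minimal surfaces of finite total curvature can and do contain straight lines: the Enneper surface contains two orthogonal lines through the origin, the singly periodic Scherk towers have FTC quotients containing lines, and many other classical examples do as well. Your argument for the ``no-line lemma'' fails at the last step: when the line lies in the limit tangent plane $\Pi_j$ of an end, you claim analyticity forces flatness, but this is exactly the situation realized by Enneper's surface, whose two lines lie in the limit tangent plane of its unique end of winding number $3$. A line in $\Pi_j$ is perfectly compatible with a sublinear multigraph end; there is no contradiction with the asymptotic description. What is true (and what the paper proves as Proposition~\ref{pro:isomet}) is only that such a surface contains at most \emph{finitely many} lines.

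Because of this, your single-line construction cannot work: you have no way to rule out that your line $\ell$ sits inside $X(M)$, in which case the intersection bound says nothing. The paper's construction gets around this by using \emph{two} lines $L_1,L_2$ meeting at an irrational angle $2\pi a$, with $6r+m+1$ points on each (sharing the intersection point, for a total of $12r+2m+1$). The paper's intersection bound is $\#X^{-1}(L)\le 6\,\mathrm{Deg}(N)+\chi(M)$, which for $X\in\bigcup_{k\le m}\Zscr_{r;k}$ gives at most $6r+m$; so $6r+m+1$ points on each $L_j$ force $L_1\cup L_2\subset X(M)$. Schwarz reflection then makes $X(M)$ invariant under the group generated by the two axial reflections, which is infinite since $a\notin\Q$; Proposition~\ref{pro:isomet} now says $X$ is a plane or a catenoid, and both are excluded (nonflat; catenoids contain no lines). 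Note also that your claimed bound $12\deg(g)+2\chi(M)-1$ is roughly twice the paper's; the paper obtains the sharper constant by a winding-number/turning-number argument on the projection of the level curve $\{|g|=1\}$, not via Morse theory on $|f|^2$.
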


Notice that in the above theorem we have $13\le 12r +2m+1\le 12r+3$. Recall, for instance, that $\Zscr_1$ consists precisely of all catenoids and Enneper's surfaces (see Osserman \cite{Osserman-book}); and hence Theorem \ref{th:main-intro-2} furnishes a set $A\subset\r^3$, consisting of $15$ points, which is contained in no catenoid and in no Enneper's surface (this particular number does not seem to be sharp). In general, it remains an open question whether the bounds $12r +2m+1$ and $12r +3$ given by Theorem \ref{th:main-intro-2}  are sharp or not;   determining the optimal ones (which, by Theorem \ref{th:intro}, do  exist!) is, likely, a highly nontrivial task.

Theorem \ref{th:main-intro-2} is, to the best of the author's knowledge, the first result of its type within the theory of minimal surfaces. 
In order to prove it we shall provide, given a complete minimal surface $X\colon M\to\r^3$ of finite total curvature, an upper bound on the cardinal of the preimage by $X$ of any affine line in $\r^3$ not contained in $X(M)$. To be precise, we show the following.
\begin{theorem} \label{th:against}
Let $X\colon M\to \r^3$ be a complete orientable immersed minimal surface with finite total curvature and empty boundary. If $L\subset\r^3$ is a straight line  not contained in $X(M)$, then
\[
	\# \big(X^{-1}(L)\big)\leq 6 {\rm Deg}(N)+\chi(M)=-\frac3{2\pi}{\rm TC}(X)+\chi(M),
\]
where ${\rm Deg}(N)$ is the degree of the Gauss map $N$ of $X$.
\end{theorem}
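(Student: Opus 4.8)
The plan is to reduce the geometric count to a zero‑counting problem for a planar harmonic map and then run an argument‑principle/degree computation, keeping track separately of the two orientation types of the intersection points.

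After a rigid motion I would assume $L$ is the $x_3$‑axis, so that with $X=(X_1,X_2,X_3)$ one has $X^{-1}(L)=\{q\in M:\ h(q):=X_1(q)+i\,X_2(q)=0\}$ (for a line parallel to the $x_3$‑axis one counts zeros of $h-c$ instead). Since $X$ is real‑analytic and $L\not\subset X(M)$, the set $X^{-1}(L)$ is finite: a Schwarz‑reflection argument shows a minimal surface containing a segment of $L$ would contain all of $L$, ruling out a $1$‑dimensional intersection. Writing $\Sigma$ for the compactification, $g\colon\Sigma\to\mathbb{CP}^1$ for the Gauss map (of degree $d=\mathrm{Deg}(N)$) and $\omega$ for the height $1$‑form of the Weierstrass representation, one computes $\partial h=-\tfrac12 g^2\omega$ and $\overline\partial h=\tfrac12\overline\omega$, so the Jacobian of $h$ is $\tfrac14|\omega|^2(|g|^4-1)$. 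Thus $h$ is sense‑preserving on $M_+=\{|g|>1\}$ and sense‑reversing on $M_-=\{|g|<1\}$, with fold (critical) locus exactly $\Gamma=g^{-1}(\s^1)$, the curve where the tangent plane contains the direction of $L$. I would first prove the bound for lines in general position — those for which $0$ is a regular value of $h$, $X^{-1}(L)\cap\Gamma=\varnothing$, and $g(p)\notin\s^1\cup\{0,\infty\}$ at every end $p$ — and then pass to an arbitrary $L\not\subset X(M)$ by perturbation, since non‑transverse (tangential) intersections can only merge points and hence cannot increase the count.

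With all intersections transverse, $\#X^{-1}(L)=N_++N_-$, where $N_\pm$ count the sense‑preserving/reversing zeros, each of index $\pm1$. Applying the argument principle to $h$ on $M_+$ and on $M_-$ separately (their common boundary $\Gamma$ being traversed oppositely), the $\Gamma$‑integrals cancel in the \emph{difference}, giving the signed count $N_+-N_-=-\sum_{p\in E}W_p$, where $W_p=\tfrac{1}{2\pi}\oint_{\partial D_p}d\arg h$ is the winding of $h$ about the end $p$. This is the Poincaré–Hopf index of the vector field $(X_1,X_2)$ and is computable from the Weierstrass data: each $W_p$ is read off from the pole orders of $\phi_1+i\phi_2=-g^2\omega$ and $\phi_1-i\phi_2=\omega$ at $p$, and summing over the ends — via the Jorge–Meeks relation between these pole orders, the genus and the number of ends — yields a closed expression in $\chi(M)$ and $d$. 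To bound instead the \emph{total} $N_++N_-$, I would add the two argument‑principle identities, so that the $\Gamma$‑integrals now reinforce: $N_++N_-=\tfrac1\pi\oint_\Gamma d\arg h+(\text{end terms})$. Hence the count is governed by the winding number of the fold curve $\Phi(\Gamma)=(X_1,X_2)(\Gamma)$ about the origin, and the heart of the matter is to bound this fold winding by $3d$. Since $\Gamma=g^{-1}(\s^1)$ and $\deg g=d$, the restriction $g|_\Gamma\colon\Gamma\to\s^1$ has total degree $d$; the claim is that the homotopy class of $h|_\Gamma$ (equivalently, the turning of the fold curve) is controlled by that of $g^2|_\Gamma$ together with the turning of $\omega$ along $\Gamma$, the total being at most $3d$. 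Combining the $3d$ fold bound with the end contribution produces $N_++N_-\le 6d+\chi(M)$, while the stated equality $6\,\mathrm{Deg}(N)=-\tfrac{3}{2\pi}\mathrm{TC}(X)$ is just Osserman's identity $\mathrm{TC}(X)=-4\pi\,\mathrm{Deg}(N)$.

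The main obstacle is precisely the fold‑curve estimate: controlling the global winding of the merely harmonic map $h|_\Gamma$ by the degree of the Gauss map, given that the winding of a \emph{value} (an integral of the Weierstrass data) is not a priori bounded by the degrees of $\partial h$ and $\overline\partial h$. I expect this to demand a careful analysis of $\Gamma=g^{-1}(\s^1)$ — its number of components, the behaviour of $\omega$ on it, and the distribution of the ends among $M_+$ and $M_-$ — rather than a soft argument, and to be where the constant $6$ and the additive $\chi(M)$ get pinned down. A possibly more transparent route to the same bookkeeping, which I would keep in reserve, is a direct Euler‑characteristic count of the cell decomposition of $\Sigma$ cut out by the two nodal systems $\{X_1=0\}$ and $\{X_2=0\}$, whose transverse crossings are exactly $X^{-1}(L)$; there $\chi(M)$ enters through Euler's formula and $6d$ through the number of zeros of the meromorphic forms $\phi_1,\phi_2$ on $\Sigma$, again with the end contributions supplied by Jorge–Meeks.
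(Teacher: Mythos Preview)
Your architecture is essentially the paper's: reduce to a generic line (the paper's conditions {\rm a)}--{\rm c)}), take $L$ as the $x_3$-axis, set $h=X_1+\imath X_2$, introduce the fold locus $\Gamma=g^{-1}(\s^1)=\{p:N(p)\perp L\}$, and run a degree/argument-principle count on the pieces bounded by $\Gamma$, with the end contributions supplied by the Jorge--Meeks formula. The paper works component by component on $\Sigma\setminus\Gamma$ rather than lumping into your $M_\pm$, but the bookkeeping is equivalent.

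The genuine gap is exactly where you place it, and your proposed attack on it does not work. You need the winding number of the planar curve $h(\Gamma)$ about the origin, and you propose to control it ``by that of $g^2|_\Gamma$ together with the turning of $\omega$ along $\Gamma$''. But the degrees of $g^2\omega$ and $\omega$ along $\Gamma$ govern only the \emph{derivative} $dh$ --- that is, the tangent/normal direction of the projected curve, hence its \emph{turning number} --- not its winding about a fixed point. Your parenthetical ``(equivalently, the turning of the fold curve)'' conflates the two invariants; they are distinct (a small circle far from the origin has winding $0$ and turning $1$). Since $h$ is an integral of the Weierstrass data, no degree information on the integrand bounds its winding a priori.

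The paper closes the gap with a separate elementary lemma (its Proposition on planar curves): for a piecewise-regular closed planar curve admitting a \emph{regular} unit normal field, the winding number about any exterior point is at most twice the turning number. This applies to each component $\alpha_j=(X_1,X_2)(c_j)$ of $h(\Gamma)$ because on $\Gamma$ the Gauss map $N$ is horizontal and, by the genericity condition that $0$ is a regular value of $\langle N,\vec e_3\rangle$, the restriction $N|_{c_j}\colon c_j\to\s^1$ is a local diffeomorphism --- hence a regular normal field along $\alpha_j$ whose degree is exactly the turning number $t_j$. Summing over components gives $\sum_j t_j=d$, so $\sum_j|w_j|\le 2d$. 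In your identity the $\Gamma$-term enters with a factor $2$ (once from each side), yielding a fold contribution $\le 4d$, not the $3d$ you wrote; adding the end term $\sum_{q\in E}I_q=2d+\chi(M)$ from Jorge--Meeks gives $6d+\chi(M)$.

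Your reserve route via the nodal sets $\{X_1=0\}$, $\{X_2=0\}$ runs into the same obstruction: $X_1,X_2$ are harmonic but not harmonic \emph{conjugates}, so their zero loci are not governed by the divisors of $\phi_1,\phi_2$ alone, and an Euler-characteristic count would again need an independent bound on the complexity of these level sets --- which is precisely what the winding-versus-turning lemma supplies.
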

Despite a bound like the one in Theorem \ref{th:main-intro-2} was expected, no explicit one is known in the literature. Hence,  from the point of view of the classical theory of minimal surfaces in $\r^3$, this theorem can be viewed as the main result of the present paper.

%
%

\subsection*{Organization of the paper}
In Section \ref{sec:prelim} we state the background which is required for the well understanding of the rest of the paper. We prove Theorem \ref{th:intro} on interpolation in Section \ref{sec:interpolation} and Theorem \ref{th:main-intro-2} on optimal hitting in Section \ref{sec:hitting}.


\section{Preliminaries}\label{sec:prelim}

We denote $\imath=\sqrt{-1}$ and $\z_+=\{0,1,2,\ldots\}$.
Given an integer $n\in\n=\{1,2,3,\ldots\}$ and $\k\in\{\r,\c\}$, we denote by $|\cdot|$ 
the Euclidean norm 
in $\k^n$.

\subsection{Riemann surfaces and spaces of maps}\label{ss:RS}

Throughout the paper every Riemann surface will be considered connected if the contrary is not indicated.

Let $M$ be an open Riemann surface. Given a subset $A\subseteq M$ we denote by $\Cscr(A,\k^n)$ the space of continuous functions $A\to\k^n$, by $\Ocal(A)$ (respectively, $\Mgot(A)$) the space of functions $A\to \c$ which are holomorphic (respectively, meromorphic) on an unspecified open neighborhood of $A$ in $M$; functions in $\Ocal(A)$ (respectively, in $\Mgot(A)$)  will be called holomorphic (respectively, meromorphic) on $A$.  Likewise, by a {\em conformal minimal immersion} $A\to \r^n$ we mean the restriction to $A$ of a  conformal minimal immersion on an open neighborhood of $A$ in $M$.



A compact subset $K$ in an open Riemann surface $M$ is said to be {\em Runge} (also called {\em holomorphically convex} or {\em $\Ocal(M)$-convex}) if every continuous function $K\to\c$, holomorphic in the interior $\mathring K$, may be approximated uniformly on $K$ by holomorphic functions on $M$; by the Runge-Mergelyan theorem \cite{Runge1885AM,Mergelyan1951DAN,Bishop1958PJM} this is equivalent to that $M\setminus K$ has no relatively compact connected components in $M$. 

\begin{definition}\label{def:fct}
Let $\Sigma$ be a compact Riemann surface with possibly non empty  boundary $b\Sigma$ and $E\subset \Sigma\setminus b\Sigma$ be a finite subset. By definition, $M:=\Sigma\setminus E$ is said to be a Riemann surface of {\em finite conformal type}. If $E\neq \varnothing$, the points in $E$ will be called   the topological ends of $M$.
\end{definition}


\subsection{Spinorial $1$-forms on  Riemann surfaces}

Let $M$ be a  Riemann surface with empty boundary. For $W\subset M$, we denote by $\Div(W)$ the free commutative group of {\em countable} divisors of $W$ with multiplicative notation, that is to say,
\[
\Div(W)=\{\prod_{j\in \n}q_j^{n_j}\colon q_j\in W,\; n_j\in \z \}.
\]
If $D=\prod_{j\in \n}q_j^{n_j}\in\Div(W)$,  the set $\{q_j\colon n_j\neq 0\}$ is said to be the {\em support} of $D$ (as usual, $q^0=1$ for all $q\in W$). A divisor $\prod_{j\in \n}q_j^{n_j}\in\Div(W)$ is said to be {\em integral} if $n_j\geq 0$ for all $j\in \n$.  

We denote by $\Omega(M)$ and $\Omega_m(M)$ the spaces of holomorphic and meromorphic $1$-forms on $M$, respectively.
If  $f\in \Mgot(M)$ is different from $0$, we denote by $(f)_0$ and $(f)_\infty$ the integral divisors of zeros and poles of $f$ in $M$ respectively, and $(f):=\frac{(f)_0}{(f)_\infty}$ the divisor of $f$ in $M$.  Likewise we define the divisors of non-zero meromorphic $1$-forms on $M$; if $M$ is compact, these divisors have finite support and are called {\em canonical}.


A holomorphic $1$-form $\omega$ is said to be {\em spinorial} if $(\omega)=D^2$ for some integral divisor $D\in\Div(M)$, equivalently, if the zeros   of $\omega$ have even order. We denote by $\Ygot(M)$ the set of holomorphic spinorial $1$-forms on $M$. We say that two $1$-forms $\omega_1$ and $\omega_2$ in $\Ygot(M)$ are   {\em spinorially equivalent}, and write $\omega_1\sim \omega_2$, if there exists a function $f\in\Mgot(M)$ such that $\omega_2=f^2 \omega_1$. An equivalence class $\Theta\in {\Ygot(M)}/{\sim}$ is called a spinorial structure on $M$. 

If $M$ is of finite topology and $k:=\dim_\z \Hcal_1(M,\z)$ is the dimension of the first homology group of $M$, there are $2^k$ pairwise distinct spinorial structures on $M$. Spinorial structures can be introduced from a topological point of view. Indeed,  take a class $\Theta$ in the set of spinorial structures and a $1$-form $\theta\in\Theta$. Consider an embedded closed curve $\gamma\subset M$, take an annulus $A$ being a tubular neighborhood of $\gamma$ in $M$, and choose a conformal parameter $z\colon A\to\{z\in\c\colon1<|z|<R \}$, ($R>1$). Then set $\xi_\Theta(\gamma):=0$ if $\sqrt{\theta(z)/dz}$ has a well defined branch on $A$ and $\xi_\Theta(\gamma)=1$ otherwise. The induced map $\xi_\Theta\colon \Hcal_1(M,\z)\to\z_2$ does not depend on the choice of $\theta\in\Theta$ and defines a group homomorphism. Further, $\xi_{\Theta_1}=\xi_{\Theta_2}$ if and only if $\Theta_1=\Theta_2$, and so, the set of spinorial structures may be identified with the set of group morphisms ${\rm Hom}(\Hcal_1(M,\z),\z_2)$. Furthermore, it is known that the map $\xi\colon \frac{\Ygot(M)}{\sim}\to{\rm Hom}(\Hcal_1(M,\z),\z_2)$ defined by $\xi(\Theta)=\xi_{\Theta}$ is bijective.

\begin{remark}\label{re:spinmero}
If $M=\Sigma\setminus E$ where $\Sigma$ is a compact Riemann surface and $E\subset \Sigma$ is a non-empty finite subset, any spinorial class $\Theta\in \Ygot(M)/\sim$ contains meromorphic $1$-forms on $\Sigma$, that is to say, there exists $\omega \in \Theta$ extending meromorphically to $\Sigma$. See \cite{Lopez2014TAMS} for details.
\end{remark}

\subsection{Spinorial data of a conformal minimal immersion}

Let $M$ be an open Riemann surface  and consider $X\colon M\to\r^3$ a conformal minimal immersion. Then, the $1$-forms $\phi_j:=\partial X_j$, $j=1,2,3$, are holomorphic on $M$ and satisfy $\sum_{j=1}^3 \phi_j^2=0$ and  $\sum_{j=1}^3 |\phi_j|^2\neq 0$. The meromorphic function  
\begin{equation}\label{eq:g}
g:=\frac{\phi_3}{\phi_1-\imath \phi_2}
\end{equation}
 defined on $M$ is the Gauss map of $X$ up to the stereographic projection.
The pair $(g,\phi_3)$ is called the Weierstrass data of $X$. The holomorphic $1$-forms
\begin{equation}\label{eq:spindata}
\eta_1:=\frac{\phi_3}{g} \quad \text{ and }\quad \eta_2:=\phi_3 g
\end{equation}
are spinorial, spinorially equivalent, and have no common zeros on $M$. The pair $(\eta_1,\eta_2)$ defined on \eqref{eq:spindata} is said to be the {\em spinorial data} associated to the conformal minimal immersion $X$, or simply the {\em spinorial representation} of $X$. Thus, the pair $(\eta_1,\eta_2)$ determines a unique spinor structure $\Theta_X\in\frac{\Ygot(M)}{\sim}$ associated to the conformal minimal immersion $X$. 

Conversely, if $\eta_1$ and $\eta_2$ are two spinorial holomorphic $1$-forms on $M$ such that $\eta_1$ and $\eta_2$ are spinorially equivalent, $|\eta_1|+|\eta_2|$ never vanishes in $M$, and the holomorphic $1$-forms $\phi_1$, $\phi_2$, and $\phi_3$ defined by
\begin{equation}\label{eq:phiofeta}
\Psi(\eta_1,\eta_2):=(\phi_1,\phi_2,\phi_3)=\bigg(\frac12(\eta_1-\eta_2)
,\frac\I2(\eta_1+\eta_2),
\sqrt{\eta_1\eta_2}\bigg)
\end{equation}
have no real periods on $M$, then for the initial condition $X(p_0)\in\r^3$, the map $X\colon M\to\r^3$ defined by
\begin{equation}\label{eq:Xweier}
X(p)=X(p_0)+\Re \int_{p_0}^{p}\big(\phi_1,\phi_2,\phi_3\big), \quad p\in M
\end{equation}
is well defined and is a conformal minimal immersion with $\Theta_X$ the spinorial class of $\eta_j$, $j\in \{1,2\}$. Notice that $\phi_3$ is well defined up to the sign, hence $X$ up to a symmetry with respect to a horizontal plane. 
 

\subsection{Conformal minimal immersions of finite total curvature}\label{ss:FTC}

 Let $M$ be a Riemann surface. A conformal minimal immersion $X\colon M\to\r^3$ is said to be of {\em total finite curvature} or, acronymously, FTC if 
\[
{\rm TC}(X):=	\int_M K \ ds^2=-\int_M |K| \ ds^2>-\infty,
\]
where $K$ denotes the Gauss curvature of the conformal minimal immersion $X$.

Huber \cite{Huber1957CMH}  proved that every   complete Riemannian surface $(M,ds^2)$ with possibly non empty compact boundary  and $\int_M \min\{K,0\}ds^2>-\infty$  is of finite conformal type, see Definition \ref{def:fct}. This applies for any Riemann surface $M$ with possibly non empty compact boundary   which admits a conformal complete minimal immersion in $\r^3$ with FTC.    If $X\colon M\to \r^3$ is a complete conformal minimal immersion of FTC,   Osserman \cite{Osserman-book} proved that  the Weierstrass data $(g,\phi_3)$ of $X$, hence the $1$-forms $\phi_1,$ $\phi_2,$ and $\phi_3$, extend meromorphically to the Huber's compactification $\Sigma$ of $M$ (the same holds for the spinorial representation of $X$, $(\eta_1,\eta_2)$). Conversely, if  $M=\Sigma \setminus E$  is a Riemann surface of finite conformal type, $X\colon M\to \r^3$ is a   conformal minimal immersion, and the  Weierstrass data of $X$ extend meromorphically to the compactification $\Sigma$ of $M$ having effective poles (i.e., of positive order) at every end of $M$ (i.e., point in  $E$),  then $X$ is complete and of FTC. In any case, the Gauss map $N\colon M\to \s^2$ of $X$ extends conformally to $\Sigma$ and, when $bM=\varnothing$,  
\[
{\rm TC}(X)=-4 \pi {\rm Deg}(N),
\]
 where ${\rm Deg}(N)$ is the topological degree of $N\colon \Sigma\to \s^2$.

The asymptotic behavior of complete   minimal surfaces of FTC was studied by Jorge and Meeks in \cite{JorgeMeeks1983T}. They proved that any complete conformal minimal immersion $X\colon M=\Sigma\setminus E \to \r^3$ of FTC is a proper topological map.  Furthermore, if $E=\{q_1,\ldots,q_s\}\subset \Sigma$ denotes the set of ends  then there exists a Euclidean ball $B(R)=\{p\in \r^3\colon \|p \|<R\}$ of large enough radius $R>0$ so that:
\begin{itemize}
\item $X^{-1}(\r^3\setminus B(R))=\bigcup_{j=1}^s (D_j\setminus \{q_j\}),$
 where $D_1,\ldots,D_s$ are pairwise disjoint closed discs in $\Sigma$ and $ D_j$    contains $q_j$ as interior point for all $j=1,\ldots,s$. 
\smallskip
\item If $a_j=N(q_j)\in   \s^2$ denotes the limit normal vector at the end $q_j\in E$,   $\Pi_j:=\{p\in \r^3\colon \langle p, a_j\rangle =0\}$ is the limit tangent plane at $q_j$, $\pi_j\colon \r^3\to \Pi_j$ is the orthogonal projection, and $I_j+1\geq 2$ is the maximum of the pole orders    of the 1-forms  $\{\phi_k\colon   k=1,2,3\}$ at $q_j$, then 
 $(\pi_j\circ X)|_{D_j\setminus\{q_j\}}\colon D_j\setminus\{q_j\}\to \Pi_j\setminus  B(R)$ is an  $I_{j}$-sheeted proper  {\em sublinear multigraph (covering)}, $j=1,\ldots,s$.
\end{itemize}
In addition, the well known Jorge-Meeks formula holds:
\begin{equation} \label{eq:j-m}
2 {\rm Deg}(N)=-\chi(M)+\sum_{j=1}^s I_j=-\chi(\Sigma)+ \sum_{j=1}^s (I_j+1).
\end{equation}

\subsection{Some remarks on planar curves}\label{ss:cusp}

For any oriented closed curve $\gamma\subset \r^2$ and  point $p\notin \gamma$,    the {\em winding number} $w_\gamma(p)\in \z$ of   $\gamma $ with respect to  $p$   is the number of times the curve winds around $p$. If   we set $\gamma_p:=\frac{\gamma-p}{\|\gamma-p\|}\subset \s^1$ then  $w_\gamma(p)=w_{\gamma_p}(0,0)$, and this number is, up to the identification $\Hcal_1(\s^1,\z)=\z$ given by the topological degree,  the homology class $[\gamma_p]\in  \Hcal_1(\s^1,\z)$. If $\gamma$ is   sufficiently tame, it is well known that $w_\gamma(p)$    coincides with the number of {\em positive crossings} minus the number of {\em negative crossings} of any (oriented) ray   based at $p$ with $\gamma$.

  A curve $\gamma\subset \r^2$ is said to be {\em piecewise regular} if it is smooth and the tangent vector $\gamma'(t)$ vanishes only on a finite set of points, namely {\em singular points}.

	We say that a piecewise regular curve  $\gamma(t)$ admits a {\em regular} normal field if there exists  a regular smooth map $n\colon \gamma \to \s^1$ such that $\gamma'(t)\bot n(t):= n(\gamma(t))$ for all $t$. (Here, regular means that $n\colon \gamma \to \s^1$  is a local diffeomorphism.)  In this case, the {\em turning number} or  {\em rotation number} of $\gamma\geq 1$  is  the  number of rotations made by the   normal vector $n(t)$ 
 during one traversal of the curve, that is to say, the absolute value of the degree of $n$ (which coincides with   the total curvature of $\gamma$    divided by $2\pi$ when $\gamma$ is regular). Notice that the turning number does not depend on the chosen regular normal field.

 If $\gamma$ is piecewise regular and admits a regular normal field $n$, a singular point $\gamma(t_0)$ of $\gamma$ is said to be a {\em cusp point} if locally around $t_0$ we have 
\begin{equation} \label{eq:cusp}
\gamma(t)-\gamma(t_0) =(t-t_0)^{2 m} \beta(t),
\end{equation}
  where $m\in \n$, $\beta(t)$ is smooth, and $\beta(t_0)\neq 0$.  In this case the curve $\gamma(t)$ {\em turns back} at $t=t_0$, and any straight line being non orthogonal to $n(t_0)$ and  not containing $\gamma(t_0)$ locally intersects $\gamma$ around $\gamma(t_0)$ either at $0$ or $2$ points; see equation \eqref{eq:cusp}.

\begin{proposition}\label{pro:curve}
Let $\gamma(t)$ be a  smooth piecewise regular closed curve in $\r^2$ admitting a  regular normal field, and denote by $t_\gamma$ the turning number of $\gamma$. For any point $p\in \r^2\setminus \gamma$ we have $|w_\gamma(p)|\leq 2t_\gamma.$
 \end{proposition}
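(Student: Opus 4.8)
The plan is to compare the two degrees at play — the winding number
$w_\gamma(p)=\deg\big(t\mapsto \tfrac{\gamma(t)-p}{|\gamma(t)-p|}\big)$ and the turning number $t_\gamma=|\deg n|$ — by expressing the radial vector $\gamma-p$ in the moving orthonormal frame $(n,Jn)$, where $J$ denotes rotation by $\pi/2$. Since $n\colon\s^1\to\s^1$ is a local diffeomorphism it is a covering map, so its angle $\theta_n(t)$ is strictly monotone; I treat the case $\deg n=t_\gamma\ge 1$ (the case $\deg n=-t_\gamma$ is symmetric via a mirror image and gives $-2t_\gamma\le w_\gamma(p)\le 0$), so $\theta_n$ increases with total increment $2\pi t_\gamma$. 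Writing $h(t)=\langle\gamma(t)-p,n(t)\rangle$ and $g(t)=\langle\gamma(t)-p,Jn(t)\rangle$, the pair $(h,g)$ gives the coordinates of $\gamma-p$ in the frame $(n,Jn)$, the loop $(h,g)\colon\s^1\to\r^2\setminus\{0\}$ is well defined because $p\notin\gamma$, and $\gamma-p=h\,n+g\,Jn$ gives $\arg(\gamma-p)=\theta_n+\arg(h,g)$. Taking total increments around the loop yields
\[
	w_\gamma(p)=t_\gamma+W,\qquad W:=\text{winding number of }(h,g)\text{ about the origin}.
\]

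Next I would bound $|W|$ by the number of tangent lines of $\gamma$ through $p$. Note $h(t)=0$ precisely when $\gamma(t)-p\perp n(t)$, i.e.\ when $p$ lies on the tangent line to $\gamma$ at $\gamma(t)$; these are the zeros one must count. The decisive point — and what makes the frame $(n,Jn)$ work where the naive height function fails — is that $h$ does \emph{not} see the cusps: since $\langle\gamma',n\rangle\equiv 0$ (including where $\gamma'=0$), one has $h'=\langle\gamma-p,n'\rangle$ with $n'\neq 0$ everywhere, so the turning-back of $\gamma$ never forces $h'=0$ and the zeros of $h$ are exactly the genuine tangencies, with no spurious inflation. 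Setting $F=(h,g)/|(h,g)|\colon\s^1\to\s^1$ (of degree $W$), the two directions $(0,\pm1)\in\s^1$ have $F^{-1}(0,\pm1)=\{h=0\}$ and each fibre has at least $|W|$ points, so the standard degree estimate gives
\[
	|W|\le \tfrac12\,\#\{t\colon h(t)=0\}=\tfrac12\,\#\{\text{tangent lines of }\gamma\text{ through }p\}.
\]
(One can moreover check $\tfrac{d}{d\theta_n}\arg(h,g)=-1$ at every zero of $h$, so all these crossings are coherently oriented and cusps enter only in the cancelling pairs anticipated by the ``$0$ or $2$ points'' remark preceding the statement.)

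The crux is therefore the geometric inequality
\[
	\#\{\text{tangent lines of }\gamma\text{ through a generic }p\}\le 2t_\gamma .
\]
Granting it, $|W|\le t_\gamma$, hence $0\le w_\gamma(p)=t_\gamma+W\le 2t_\gamma$, so $|w_\gamma(p)|\le 2t_\gamma$ for generic $p$; the bound then holds for all $p\in\r^2\setminus\gamma$ because $w_\gamma(\cdot)$ is locally constant there and generic points are dense. To attack the crux I would reparametrise by the normal angle $\theta\in\r/2\pi t_\gamma\z$ (legitimate since $n$ is a local diffeomorphism) and study $h(\theta)$; with $\lambda(\theta)=\langle\frac{d\gamma}{d\theta},Jn\rangle$ (vanishing exactly at cusps) one computes $h''+h=\lambda$, whose unforced solutions have exactly $2t_\gamma$ zeros over the interval of length $2\pi t_\gamma$ (for $t_\gamma=1$ this is the classical statement that a convex curve has two tangents through an exterior point and none through an interior one). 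The main obstacle is precisely that the forcing $\lambda$, concentrated near the cusps, could a priori create additional zeros; controlling this is where the hypothesis that $n$ is a genuinely well-defined normal field on the \emph{trace} is indispensable — it forces every self-intersection to be a self-tangency, preventing the normal from ``unwinding'' and keeping the zero count at $2t_\gamma$. Without this global single-valuedness the bound is false, so making the $h''+h=\lambda$ count rigorous under exactly that hypothesis is the heart of the argument.
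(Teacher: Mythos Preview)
Your decomposition $w_\gamma(p)=\deg n+W$ is correct, but the ``crux'' inequality
\[
\#\{\text{tangent lines of }\gamma\text{ through a generic }p\}\le 2t_\gamma
\]
is \emph{false}, and with it the bound $|W|\le t_\gamma$ and the claim that the sign of $w_\gamma(p)$ is determined by the sign of $\deg n$. The astroid $\gamma(t)=(\cos^3 t,\sin^3 t)$ already breaks it: it is piecewise regular with four cusps, admits the regular normal field $n(t)=(\sin t,\cos t)$ (so $\deg n=-1$, $t_\gamma=1$), and for $p=(\epsilon,0)$ near the origin one has $h(t)=\tfrac12\sin 2t-\epsilon\sin t=\sin t(\cos t-\epsilon)$, with \emph{four} simple zeros. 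Correspondingly $(h,g)=(\tfrac12\sin 2t,-\cos 2t)$ at $p=0$ has $W=2>1=t_\gamma$, and $w_\gamma(p)=+1$ lies outside your predicted interval $[-2t_\gamma,0]$. Since the astroid is embedded, your appeal to ``$n$ single-valued on the trace forces self-tangencies'' is vacuous here and cannot rescue the Sturm-type count for $h''+h=\lambda$: the forcing $\lambda$ really does create extra zeros. (Incidentally, in the paper's intended application the planar curves $\alpha_j=(X_1,X_2)(c_j)$ may well have transverse self-intersections; the normal field lives on the smooth Jordan curve $c_j\subset M$, not on the trace $\alpha_j$.)

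The paper's argument avoids tangent lines altogether and is much more elementary. One counts signed crossings of a generic vertical ray $\ell$ from $p$ with $\gamma$. The curve is split into the $2t_\gamma$ arcs $n^{-1}(\s^1_\pm)$; on each such arc the horizontal coordinate $\pi_0\circ\gamma$ is monotone between consecutive cusps and reverses monotonicity at each cusp, so successive crossings of $\ell$ along the arc alternate sign. Hence each of the $2t_\gamma$ arcs contributes $0$, $+1$, or $-1$ to $w_\gamma(p)$, giving $|w_\gamma(p)|\le 2t_\gamma$ directly.
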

As it is shown in Figure \ref{fig:cus}, the bound in this proposition is sharp.
\begin{figure}[ht] 
    \begin{center}
 \scalebox{0.45}{\includegraphics{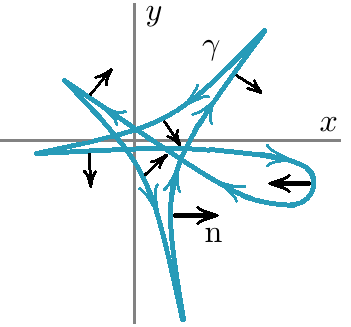}}
     \end{center}
\caption{An oriented planar curve $\gamma$ with four cusp points,  turning number $t_\gamma=1$, and winding number $w_\gamma=2$ with respect to the origin.}
\label{fig:cus}
\end{figure}

\begin{proof} Let $n(t)\colon \gamma\to \s^1$  be a regular normal field   along $\gamma(t)$, hence a local diffeomorphism. Call $\pi_0\colon \r^2\to \r$  the orthogonal projection $(x,y)\mapsto x$. Up to a rigid motion we can suppose that 
\begin{itemize}
\item the ray $\ell:=\{p+(0,y)\colon y\geq 0\}$   intersects  $\gamma$  only at regular points and in a transversal way.
\smallskip
\item $n^{-1}(\{(1,0),(-1,0)\})$ contains no singular point of $\gamma$.
\end{itemize}   
 Let $\gamma_1,\ldots, \gamma_{2 t_\gamma}$ denote the family of connected components of $ n^{-1}(\s^1_+)$ and  $n^{-1}(\s^1_-)$ ordered so that they are laid end to end, where 
$\s^1_+:=\{(x,y)\in \s^1\colon y\geq 0\}$ and $\s^1_-:=\{(x,y)\in \s^1\colon y\leq 0\}$.
Since each $\gamma_j$ is piecewise regular, we can split it into Jordan arcs $\gamma_{j,1}\ldots, \gamma_{j,m_j}$ lying end to end and satisfying:
\begin{itemize}
\item  the endpoints of  $\gamma_{j,i}$   are either cusp points or points in the set $n^{-1}(\{(1,0),(-1,0)\}$,
\smallskip
\item the interior  of  $\gamma_{j,i}$ contains no cusp points, $i=1,\ldots,m_j$.
\end{itemize}

An arc $\gamma_{j,i}$ is said to be {\em positive} if for any interior point $\gamma_{j,i}(t)$ of $\gamma_{j,i}$ , the ordered basis
  $\{(0,1), \gamma_{j,i}'(t)\}$ of $\r^2$ is positive, or equivalently, if the first coordinate function $(\pi_0\circ \gamma_{j,i})(t)$ on $\gamma_{j,i}$ is   decreasing. Otherwise $\gamma_{j,i}$ is said to be {\em negative}. Obviously, two consecutive sub-arcs $\gamma_{j,i}$ and $\gamma_{j,i+1}$ in $\gamma_j$ have different character; see \eqref{eq:cusp}. Since $\pi_0|_{\gamma_{j,i}}$ is one to one, then either $\ell\cap \gamma_{j,i}$ is empty or consists of a unique point, and in the second case the crossing of $\gamma_{j,i}$ with $\ell$ is positive if and only if $\gamma_{j,i}$ is positive (and negative otherwise). By a connectivity argument, if $i_1<i_2$,  $\ell \cap \gamma_{j,i_1}\neq \varnothing$, $\ell \cap \gamma_{j,i_2}\neq \varnothing$, and $\ell \cap \gamma_{j,i}= \varnothing$ for all $i_1<i<i_2$, then $\gamma_{j,i_1}$ and $\gamma_{j,i_2}$ have different character. Therefore, the number of positive crossings minus the number of negative crossings of $\ell$ with $\gamma_j$ is either $0$ or $1$ or $-1$. 
It follows that $|w_\gamma(p)|$ is at most $2t_\gamma$, which concludes the proof.
\end{proof}

\section{Interpolation: proof of Theorem \ref{th:intro}}\label{sec:interpolation}

Polynomial Approximation is a fundamental subject in complex analysis, which is naturally linked in a strong way with Polynomial Interpolation. The classical Runge theorem says that for any compact set $K\subset\c$ such that $\c\setminus K$ is connected, every holomorphic function $f$ on a neighborhood of $K$ may be approximated, uniformly on $K$, by polynomials $\c\to\c$ (see \cite{Runge1885AM}). If in addition we are given a finite subset $\Lambda\subset K$, then the approximating polynomials can be chosen to agree with $f$ everywhere on $\Lambda$ (see Walsh \cite{Walsh1928TAMS}). Behnke and Stein extended Runge's theorem to the more general framework of meromorphic functions on compact Riemann surfaces (see \cite{BehnkeStein1949MA}); this was later generalized by ensuring, in addition, interpolation of arbitrary finite order on a finite set. To be precise, Royden proved that for any $\Sigma$, $E$, and $\Lambda$ as in Theorem \ref{th:intro}, given an integer $k\ge 1$ and a compact subset $K\subset\Sigma\setminus E$ such that $\Lambda\subset K$ and $E$ intersects all connected components of $\Sigma\setminus K$, then every holomorphic function $f$ on a neighborhood of $K$ may be approximated, uniformly on $K$, by meromorphic functions $F$ on $\Sigma$, being holomorphic in $\Sigma\setminus E$, such that the holomorphic function $F-f\colon K\to\c$ has a zero of order at least $k$ at all points in $\Lambda$ (see \cite[Theorem 10]{Royden1967JAM}).

We shall obtain Theorem \ref{th:intro} as a consequence of the following more precise result providing approximation, interpolation of given finite order, and control on the flux, which can be seen as an analogue for the above mentioned Royden theorem from  \cite{Royden1967JAM}. Recall that the flux map of a conformal minimal immersion $X\colon M\to\r^3$ of an open Riemann surface, $M$, is the group homomorphism $\Flux_X\colon \Hcal_1(M,\z)\to\r^3$, of the first homology group of $M$ with integer coefficients, given by 
\[
\text{$\Flux_X(\gamma)=\Im\int_\gamma \di X=-\imath\int_\gamma \di X$ for any loop $\gamma\subset M$.}
\]
\begin{theorem}\label{th:main-1}
Let $\Sigma$ be a compact Riemann surface with empty boundary, $\varnothing\neq E\subset\Sigma$ be a finite set, $K\subset \Sigma\setminus E$ be a compact  set such that every connected component of $\Sigma\setminus K$ contains points of $E$, and $\Lambda\subset K$ be a finite set. Also let $X\colon K\to\r^3$ be a conformal minimal immersion and $\pgot\colon \Hcal_1(\Sigma\setminus E,\z)\to\r^3$ be a group homomorphism such that
$\pgot(\gamma)=\Flux_X(\gamma)$ for all loops $\gamma\subset K$.
Then, for any integer $k\ge 0$ and any number $\epsilon>0$, there is a complete conformal minimal immersion $\wt X\colon\Sigma\setminus E\to\r^3$ satisfying the following conditions:
\begin{itemize}
\item[\rm (i)] $\wt X$ has finite total curvature.
\smallskip
\item[\rm (ii)] $|\wt X(\zeta)-X(\zeta)|<\epsilon$ for all $\zeta\in K$.
\smallskip
\item[\rm (iii)] $\wt X$ and $X$ have a contact of order $k$ at every point in $\Lambda$, i.e., $\wt X=X$ everywhere on $\Lambda$ and, if $k\ge 1$, $\di\wt X-\di X$ has a zero of order at least $k-1$ at every point in $\Lambda$.
\smallskip
\item[\rm (iv)] $\Flux_{\wt X}(\gamma)=\pgot(\gamma)$ for all loops $\gamma\subset \Sigma\setminus E$.
\end{itemize}
\end{theorem}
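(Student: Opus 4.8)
The plan is to reduce everything to the spinorial picture and to split the construction into a scalar approximation--interpolation step followed by a period--correction step carried out with holomorphic sprays. Fix the spinorial structure $\Theta_X$ determined by $X$ near $K$; by Remark~\ref{re:spinmero} it contains a $1$-form $\omega$ that is meromorphic on $\Sigma$, and every element of $\Theta_X$ is of the form $f^2\omega$ with $f$ meromorphic. Writing $\eta_1=f_1^2\omega$, $\eta_2=f_2^2\omega$, the immersion is governed by the pair $(f_1,f_2)$ through
\[
\Psi(\eta_1,\eta_2)=\left(\tfrac12(f_1^2-f_2^2),\ \tfrac{\imath}{2}(f_1^2+f_2^2),\ f_1 f_2\right)\omega .
\]
The decisive feature of this substitution is that the null relation $\sum_j\phi_j^2=0$ holds identically, so no quadric constraint survives: $(f_1,f_2)$ may be varied freely, the immersion condition reduces to the open requirement that $f_1,f_2$ have no common zeros (away from the zeros of $\omega$), and, by the discussion in Section~\ref{ss:FTC}, completeness together with finite total curvature is equivalent to $\eta_1,\eta_2$ extending meromorphically to $\Sigma$ with effective poles at every point of $E$. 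The value of the immersion, its flux, and the contact at $\Lambda$ are all linear functionals of $\phi=\Psi(\eta_1,\eta_2)$.

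\textbf{Scalar step.} Fix a basis $\gamma_1,\dots,\gamma_\ell$ of $\Hcal_1(\Sigma\setminus E,\z)$. Since $X$ is a genuine (single-valued) immersion on $K$, its $1$-forms have vanishing real periods, so $\oint_{\gamma}\di X=\imath\,\Flux_X(\gamma)=\imath\,\pgot(\gamma)$ for every loop $\gamma\subset K$; the hypothesis thus pins the target periods over the homology classes \emph{captured} by $K$ to the values already realized by $X$, whereas over the remaining classes the flux $\pgot$ is prescribed freely and must be produced from scratch. Applying the Royden-type theorem recalled above (\cite[Theorem~10]{Royden1967JAM}) to the components $f_1,f_2$ of the spinorial data of $X$, while prescribing principal parts at the points of $E$ so that the resulting $\eta_1,\eta_2$ acquire effective poles there, I obtain meromorphic functions $\hat f_1,\hat f_2$ on $\Sigma$, holomorphic on $\Sigma\setminus E$, uniformly close to $f_1,f_2$ on $K$ and agreeing with them to order $k-1$ at every point of $\Lambda$. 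The associated immersion $X_0$ is complete, of finite total curvature, $\epsilon$-close to $X$ on $K$, and has $\di X_0-\di X$ vanishing to order $k-1$ at $\Lambda$; only the \emph{exact} flux~(iv) and the \emph{exact} values $\wt X=X$ on $\Lambda$ remain to be arranged.

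\textbf{Spray step.} This is the heart of the argument. I would build a holomorphic spray of \emph{algebraic} spinorial representations with core $X_0$: choosing meromorphic functions $a_1,b_1,\dots,a_N,b_N$ on $\Sigma$ that are holomorphic on $\Sigma\setminus E$, have poles at $E$ no worse than those already present, vanish to order at least $k$ at every point of $\Lambda$, and are uniformly small on $K$, set $f_1^\zeta=\hat f_1+\sum_\nu\zeta_\nu a_\nu$, $f_2^\zeta=\hat f_2+\sum_\nu\zeta_\nu b_\nu$, $\phi^\zeta=\Psi\big((f_1^\zeta)^2\omega,(f_2^\zeta)^2\omega\big)$, for $\zeta\in\c^N$. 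Every member extends meromorphically to $\Sigma$ with effective poles exactly at $E$ and, because the $a_\nu,b_\nu$ vanish to order $k$ at $\Lambda$, keeps $\di X^\zeta-\di X$ vanishing to order $k-1$ there; smallness on $K$ guarantees that $X^\zeta$ stays $\epsilon$-close to $X$ on $K$ throughout the relevant parameter range, independently of $|\zeta|$. Consider the period--interpolation map
\[
\Pcal(\zeta)=\Big(\oint_{\gamma_1}\phi^\zeta,\dots,\oint_{\gamma_\ell}\phi^\zeta,\ \textstyle\int_{p_0}^{\lambda}\phi^\zeta\ (\lambda\in\Lambda)\Big),
\]
whose prescribed target is dictated by $X$: purely imaginary periods $\imath\,\pgot(\gamma_i)$ and definite integrals matching those of $\di X$. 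If the $a_\nu,b_\nu$ are arranged so that $\Pcal$ is a submersion whose image covers the target, an implicit function / open-mapping argument produces $\zeta_*$ with $\Pcal(\zeta_*)$ equal to it. Then $\wt X:=X^{\zeta_*}$ has vanishing real periods and $\Flux_{\wt X}=\pgot$, satisfies $\wt X=X$ on $\Lambda$ with order-$k$ contact, is complete of finite total curvature, and approximates $X$ on $K$; this yields~(i)--(iv).

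\textbf{Main obstacle.} The crux is the construction of the dominating spray inside this rigid algebraic class---the sprays of algebraic spinorial representations. One must exhibit meromorphic variations $a_\nu,b_\nu$ that simultaneously (a) have poles confined to $E$, (b) vanish to order $k$ at $\Lambda$, (c) are as small as desired on $K$, and yet (d) have period vectors over $\gamma_1,\dots,\gamma_\ell$ and definite integrals to $\Lambda$ that together span the period--interpolation space. The genuinely delicate point, absent from a mere small-correction scheme, is (d) combined with (c) for the homology classes \emph{not} captured by $K$: there the target flux is unrelated to $X$, so the variations must carry prescribed, possibly large periods around cycles lying off $K$ while remaining negligible on $K$---achievable by concentrating their poles and periods away from $K$, near $E$ and along those cycles. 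Producing enough such admissible functions with linearly independent period images is a Riemann--Roch / Mittag-Leffler existence problem, solved by enlarging, if necessary, the allowed pole orders at $E$; a general-position choice then keeps all members immersed. It is precisely this spray formalism, merging the interpolation scheme of \cite{AlarconCastro2017} with the finite-total-curvature approximation of \cite{Lopez2014TAMS}, that makes the simultaneous control of approximation, finite total curvature, finite-order interpolation, and flux possible.
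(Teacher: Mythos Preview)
Your overall strategy---reduce to the spinorial picture, apply a Royden-type approximation to the scalar data, then correct periods and values via a dominating spray of algebraic spinorial representations---is exactly the one the paper follows. But there is one structural move you are missing, and it is precisely what dissolves the difficulty you flag as the ``main obstacle.''

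The paper does \emph{not} attempt to hit, via the spray, flux values on homology classes lying outside $K$. Instead, as its very first step it invokes \cite[Theorem~1.2 and Proposition~4.3]{AlarconCastro2017} to replace $K$ by a connected compact that is a strong deformation retract of $\Sigma\setminus E$, and simultaneously to replace $X$ by a nonflat conformal minimal immersion on this larger set whose flux map already equals $\pgot$. After this reduction every class in $\Hcal_1(\Sigma\setminus E,\z)$ is represented by a loop in $K$, so the target periods are \emph{all} close to those of the core; the spray then only has to produce a small correction, and local surjectivity (submersivity at the core) is enough. Your plan, by contrast, asks the spray to realize possibly large, prescribed periods on cycles disjoint from $K$ while staying negligible on $K$. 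That is not a local inverse-function-theorem statement, and your one-line justification (``concentrating poles near $E$'') does not explain why the image of a submersion at $\zeta=0$ should cover a distant target; this is a genuine gap that the paper's preliminary reduction simply avoids.

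Two smaller points. First, the paper is careful to choose the reference form (your $\omega$, its $\theta_0$) to be \emph{nonvanishing on $K$} (Lemma~\ref{lem:theta}); otherwise $f_1,f_2$ may have poles on $K$ and Royden does not apply directly. Second, the paper uses a \emph{multiplicative} spray
\[
\big((1+\textstyle\sum\zeta h)\,u_1,\ (1+2\textstyle\sum\zeta h)\,u_2\big),
\]
not your additive one. This is not cosmetic: the multiplicative form automatically preserves effective poles at $E$ and the no-common-zeros condition (since $1+\sum\zeta h$ and $1+2\sum\zeta h$ can never vanish simultaneously), whereas with an additive perturbation $\hat f_j+\sum\zeta a_\nu$ both properties can fail for special $\zeta$ and require extra bookkeeping.
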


In case $\Lambda=\varnothing$ (i.e., without asking any interpolation property) the above theorem is already proved in \cite{Lopez2014TAMS}. On the other hand, he crucial novelty of Theorem \ref{th:main-1} is condition {\rm (i)}; except for it, the theorem  follows from the results in \cite{AlarconCastro2017}.   

\subsection{Proof of Theorem \ref{th:main-1}}
In view of \cite[Theorem 1.2 and Proposition 4.3]{AlarconCastro2017}, we may assume without loss of generality that $K$ is connected and a strong deformation retract of $\Sigma\setminus E$ and that $X$ is a nonflat conformal minimal immersion  with flux map $\pgot\colon \Hcal_1(\Sigma\setminus E,\z)\to\r^3$.

Call $(\phi_1,\phi_2,\phi_3)$ the Weierstrass data  and $(\eta_1,\eta_2)$ the spinorial representation of $X\colon K\to \r^3$. 
We need the following lemma. 

\begin{lemma}\label{lem:theta}
There exists a meromorphic $1$-form $\theta_0$ on $\Sigma$ meeting the following requirements:
	\begin{itemize}
		\item[{\rm a)}] $\theta_0$ is a spinorial holomorphic $1$-form on $\Sigma\setminus E$.
		\smallskip
		\item[{\rm b)}] $\theta_0$ does not vanish anywhere on $K$.
		\smallskip
		\item[{\rm c)}] $\theta_0$ is spinorially equivalent to $\eta_1$, and so to $\eta_2$, on $K$.
	\end{itemize}
\end{lemma}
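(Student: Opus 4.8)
The plan is to first produce a \emph{single} meromorphic $1$-form $\omega$ on $\Sigma$ lying in the correct spinorial class, and then to kill its zeros on $K$ by multiplying it by the square of a suitable meromorphic function. Since normalizing by a square preserves both the spinorial character and the spinorial class, conditions (a) and (c) will come almost for free, and essentially all the difficulty will be concentrated in achieving (b).

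For the first step, recall that we have reduced to the case in which $K$ is a strong deformation retract of $\Sigma\setminus E$, so the inclusion induces an isomorphism $\Hcal_1(K,\z)\cong\Hcal_1(\Sigma\setminus E,\z)$. The spinorial data $(\eta_1,\eta_2)$ of $X$ determine a spinorial structure $\xi_{\Theta_X}\in\mathrm{Hom}(\Hcal_1(K,\z),\z_2)$, which I transport to $\mathrm{Hom}(\Hcal_1(\Sigma\setminus E,\z),\z_2)$ through this isomorphism. As $\xi\colon\Ygot(\Sigma\setminus E)/{\sim}\to\mathrm{Hom}(\Hcal_1(\Sigma\setminus E,\z),\z_2)$ is bijective, there is a unique spinorial class $\Theta$ on $\Sigma\setminus E$ inducing this homomorphism, and by Remark \ref{re:spinmero} the class $\Theta$ contains a $1$-form $\omega$ that is holomorphic and spinorial on $\Sigma\setminus E$ and extends meromorphically to $\Sigma$. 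By construction $\omega$ and $\eta_1$ induce the same element of $\mathrm{Hom}(\Hcal_1(K,\z),\z_2)$, so $\omega\sim\eta_1\sim\eta_2$ on $K$; this already gives (a), and it will give (c) once $\omega$ is replaced by $f^2\omega$.

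The heart of the matter is (b), the removal of the zeros of $\omega$ on $K$. Since $\omega$ is spinorial, these occur at finitely many points $p_1,\dots,p_\ell\in K$ with even orders $2n_1,\dots,2n_\ell$. On a small neighborhood $U\supset K$, which is an open Riemann surface, every divisor is principal, so there is $g\in\Mgot(U)$ with $(g)=\prod_i p_i^{-n_i}$; thus $g^2\omega$ is holomorphic and nowhere vanishing on $K$. It remains to approximate $g$ on $K$ by a function $f\in\Mgot(\Sigma)$ that is holomorphic on $(\Sigma\setminus E)\setminus\{p_1,\dots,p_\ell\}$ and has at each $p_i$ the same principal part as $g$. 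To this end a standard application of the Riemann--Roch theorem (using $E\neq\varnothing$ to absorb the finite-dimensional residue obstruction by allowing high-order poles in $E$) provides $f_0\in\Mgot(\Sigma)$ with the prescribed principal parts at the $p_i$ and all remaining poles in $E$. The difference $g-f_0$ is then holomorphic on a neighborhood of $K$, so by Royden's theorem \cite[Theorem 10]{Royden1967JAM} it may be approximated, uniformly on $K$, by functions $h\in\Mgot(\Sigma)$ holomorphic on $\Sigma\setminus E$. Setting $f:=f_0+h$ gives $f\in\Mgot(\Sigma)$, holomorphic on $\Sigma\setminus E$ away from the $p_i$, with the same principal part as $g$ at each $p_i$, and with $f$ arbitrarily close to $g$ on $K$.

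Finally I would put $\theta_0:=f^2\omega$ and verify the three conditions. It is meromorphic on $\Sigma$; the pole of $f^2$ of order $2n_i$ at $p_i$ cancels exactly the zero of $\omega$ there while $f$ has no other poles on $\Sigma\setminus E$, so $\theta_0$ is holomorphic on $\Sigma\setminus E$, and being $f^2\omega$ it still has even zero orders there, proving (a); moreover $\theta_0=f^2\omega\sim\omega\sim\eta_1\sim\eta_2$ on $K$, which is (c). The main obstacle, as anticipated, is (b): although $g$ blows up at the $p_i$, the difference $f-g=h-(g-f_0)$ is holomorphic and small on a whole neighborhood of $K$, whence $f^2\omega-g^2\omega=(f-g)(f+g)\omega$ is uniformly small on $K$ (note $(f+g)\omega$ is bounded near $p_i$, since $\omega$ vanishes to order $2n_i$ while $f+g$ has a pole of order only $n_i$). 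Thus $f^2\omega\to g^2\omega$ uniformly on the compact set $K$, and since $g^2\omega$ is nowhere zero on $K$, a sufficiently good approximation makes $\theta_0$ nowhere zero on $K$. The delicate interplay is precisely arranging a \emph{globally} meromorphic square factor that cancels the zeros of $\omega$ on $K$ without introducing new ones and without creating poles in $\Sigma\setminus E$, which is what forces the combined use of the Mittag--Leffler (principal-part) correction and the Royden-type approximation.
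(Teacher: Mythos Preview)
Your proof is correct, but the route you take to condition {\rm (b)} differs from the paper's. Both arguments start identically, invoking Remark \ref{re:spinmero} to obtain a meromorphic $\omega$ on $\Sigma$ that is holomorphic spinorial on $\Sigma\setminus E$ and spinorially equivalent to $\eta_1$ on $K$; both then set $\theta_0=h^2\omega$ for a suitable $h\in\Mgot(\Sigma)$. The difference lies in how $h$ is produced. The paper does it in one stroke by a purely divisor-theoretic argument (citing \cite[Claim~4.1]{Lopez2014TAMS}): writing $(\omega|_{\Sigma\setminus E})=D_0^2$, it finds $h_1\in\Mgot(\Sigma)$ with $(h_1)=D_1 D_0^{-1} q_0^{-a}$ for some integral $D_1$ supported in a disc $W\subset\Sigma\setminus(K\cup E)$ and some $q_0\in E$, so that $\theta_0=h_1^2\omega$ has divisor $D_1^2$ on $\Sigma\setminus E$ and in particular is zero-free on $K$; no approximation is needed. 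Your construction instead combines a Mittag--Leffler step (matching principal parts of a local $g$ at the zeros of $\omega$ in $K$, with the genus obstruction absorbed into poles at $E$) with Royden's approximation, and then a Hurwitz-type stability argument to conclude $f^2\omega\neq 0$ on $K$. Your approach is a bit longer but entirely self-contained within tools already used in the paper, whereas the paper's is shorter and exact but leans on the external claim about prescribing divisors. Either way, the spinorial character and the spinorial class are preserved automatically by the square factor, so {\rm (a)} and {\rm (c)} follow just as you say.
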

\begin{proof}
Remark \ref{re:spinmero} provides a  meromorphic $1$-form $\omega$ on $\Sigma$ satisfying {\rm a)} and {\rm c)}. If $\omega$ meets {\rm b)} we are done, otherwise fix $q_0\in E$, write $(\omega|_{\Sigma\setminus E})=D_0^2$, and fix an open disk $W\subset\Sigma\setminus (K\cup E)$.
By \cite[Claim 4.1]{Lopez2014TAMS}, there is a meromorphic function $h_1\colon \Sigma\to\c$ such that 
$(h_1)=D_1 D_0^{-1} q_0^{-a}$ for some  integral divisor $D_1\in\Div(W)$  and  $a\in \z$.
The $1$-form $\theta_0:=h_1^2\omega$ solves the lemma. Indeed, just observe that $(\theta_0)=D_1^2 D_E$, where $D_E\in \Div(E)$; in particular, $(\theta_0|_{\Sigma\setminus E})=D_1^2\geq 0$ and hence $\theta_0|_{\Sigma\setminus E}$ is holomorphic.
\end{proof}

Fix $\theta_0$ a $1$-form given by  Lemma \ref{lem:theta}. Let $u_j\in \Ocal(K)$ be a function satisfying
\begin{equation}\label{eq:uj}
u_j^2=\frac{\eta_j}{\theta_0}, \quad j=1,2.
\end{equation}
Since $X$ is nonflat, $u_j\colon K\to \c$ is non-identically zero and the divisor on $K$
\begin{equation}\label{eq:divw}
(u_j^2)=(\eta_j), \quad j=1,2.
\end{equation}

Fix $p_0\in \mathring K \setminus\Lambda$ such that $u_j(p_0)\neq 0$, $j=1,2$,  and write $\Lambda=\{p_1,\ldots,p_m\}$. Take $\{\gamma_1,\ldots,\gamma_l\}$, $l\geq m$,  curves in $K$ such that:
\begin{itemize}
\item $u_1$ and $u_2$ do not vanish anywhere  on   $\gamma_j$ for all $j=1,\ldots,l$,
\item $\gamma_j$ is a  smooth Jordan arc joining $p_0$ with $p_j$, $j=1,\ldots,m$, 
\item $\{\gamma_{m+1},\ldots,\gamma_l\}$ are loops in $K$ determining a basis of $\Hcal_1(K,\z)$, hence of $\Hcal_1(\Sigma\setminus E,\z)$, and 
\item $\gamma_i\cap\gamma_j=\{p_0\}$ for all $i\neq j \in \{1,\ldots,l\}$.
\end{itemize}
(Recall that $K$ is a strong deformation retract of $\Sigma\setminus E$.) 
Write $C=\bigcup_{j=1}^l\gamma_j$, and notice that $C\subset K$ is a Runge set in $\Sigma\setminus E$ which is a strong deformation retract of $\Sigma\setminus E$ as well.
Set 
\begin{equation}\label{eq:SpinK}
{\Spin}(K):= \{(f_1,f_2)\in \Ocal(K)^2\colon (|f_1|+ |f_2|)(p)\neq 0, \;\forall p\in K\}
\end{equation}
and observe that $(u_1,u_2)\in \Spin (K)$. Consider the continuous period map $\Pcal=(\Pcal_1,\ldots,\Pcal_l)\colon \Spin (K) \to\c^{3l}$ given by
\begin{equation}\label{eq:Periodofeta}
\Pcal_j(f_1,f_2)=\int_{\gamma_j}\big(\Phi(f_1,f_2)-\Phi(u_1,u_2)\big)\in\c^3 \quad \text{ for all $j=1,\ldots,l$}
\end{equation}
where $\Phi(\cdot,\cdot)$ is  formally defined by
\begin{equation}\label{eq:phioffunctions}
\Phi(f_1,f_2):=\bigg(\frac12(f_1^2-f_2^2)
,\frac\I2(f_1^2+f_2^2),
f_1f_2\bigg)\theta_0.
\end{equation}
Notice that $\Phi(u_1,u_2)=(\phi_1,\phi_2,\phi_3)$ are the Weierstrass data of $X$.


Let  $k\in\n$ be the integer number in the statement of the theorem. 

\begin{lemma}\label{lem:spray}
For any $i=1,2,3$,	there exist     $h^i_1,\ldots,h^i_{l}\in \Ocal(K)$ with a zero of multiplicity $2k\in\n$ at each point of $\Lambda$, such that:
\begin{itemize}
\item  The holomorphic map $\psi\colon \c^{3l} \to \Ocal(K)^2$ defined by
	\begin{equation}\label{eq:spray0}
	\psi(\zeta)= \big((1+\sum_{i=1}^{3}\sum_{j=1}^{l}\zeta^i_j h^i_j) u_1,  (1+2\sum_{i=1}^{3}\sum_{j=1}^{l}\zeta^i_j h^i_j) u_2 \big),
	\end{equation}
	where $\zeta=\big((\zeta_j^i)_{i=1,2,3}\big)_{j=1,\ldots,l}\in (\c^{3})^l$, assumes values in $ \Spin(K)$.
\item $\psi$ is a period dominating spinorial spray with core $(u_1,u_2)$; that is to say, $\psi(0)=(u_1,u_2)$ and the map	
	\[
			\Pcal \circ \psi\colon \c^{3 l}  \to \c^{3l}
	\]
 is a submersion at  $\zeta=0$. In particular, there exists a Euclidean ball $V_0\subset \c^{3 l}$ centered at the origin such that $(\Pcal \circ \psi)(V_0)\subset \c^{3 l}$ is a domain and  $\Pcal \circ \psi\colon V_0  \to (\Pcal \circ \psi)(V_0)$ is a biholomorphism.
\end{itemize}
\end{lemma}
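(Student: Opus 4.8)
The plan is to show that the differential of $\Pcal\circ\psi$ at the core $\zeta=0$ is an isomorphism, and to manufacture the functions $h^i_j$ by a Mergelyan--type approximation on the one-dimensional set $C$. Since $\psi(0)=(u_1,u_2)$ by \eqref{eq:spray0}, writing $\psi(\zeta)=(f_1,f_2)$ and differentiating \eqref{eq:Periodofeta} and \eqref{eq:phioffunctions} with $\frac{\di f_1}{\di\zeta^i_{j'}}\big|_0=h^i_{j'}u_1$ and $\frac{\di f_2}{\di\zeta^i_{j'}}\big|_0=2h^i_{j'}u_2$ gives, after a routine computation,
\[
\frac{\di(\Pcal_j\circ\psi)}{\di\zeta^i_{j'}}\bigg|_{\zeta=0}=\int_{\gamma_j}h^i_{j'}\,\Xi\in\c^3,\qquad \Xi=(\xi_1,\xi_2,\xi_3):=\big(u_1^2-2u_2^2,\ \I(u_1^2+2u_2^2),\ 3u_1u_2\big)\theta_0.
\]
Here the two distinct multipliers $1$ and $2$ in \eqref{eq:spray0} are essential: with equal multipliers $\Xi$ would be proportional to the Weierstrass data $\Phi(u_1,u_2)$ and could never span $\c^3$. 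The $3l\times3l$ Jacobian of $\Pcal\circ\psi$ at $0$ is thus assembled from the blocks $\big[\int_{\gamma_j}h^i_{j'}\xi_s\big]_{s,i}$, and it suffices to make this matrix invertible.

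The core of the argument is that, for each $j$, the scalar $1$-forms $\xi_1,\xi_2,\xi_3$ are linearly independent over $\c$ along $\gamma_j$. A constant combination $\lambda_1\xi_1+\lambda_2\xi_2+\lambda_3\xi_3$ equals $(a\,u_1^2+b\,u_1u_2+c\,u_2^2)\theta_0$, where $(a,b,c)$ is a linear bijective image of $(\lambda_1,\lambda_2,\lambda_3)$. As $\theta_0$ does not vanish on $\gamma_j$ by Lemma \ref{lem:theta} and $u_1$ does not vanish on $\gamma_j$ by the choice of the curves, this form vanishes identically on $\gamma_j$ only if $c\,w^2+b\,w+a\equiv0$ there, where $w:=u_2/u_1$ is holomorphic near $\gamma_j$. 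From \eqref{eq:spindata} and \eqref{eq:uj} one has $w^2=\eta_2/\eta_1=g^2$, so $w$ is a branch of $\pm g$, the Gauss map of $X$; since $X$ is nonflat, $g$, and hence $w$, is nonconstant on the arc $\gamma_j$ and therefore takes infinitely many values. A nontrivial relation $c\,w^2+b\,w+a\equiv0$ would confine $w$ to the zero set of the fixed nonzero polynomial $cx^2+bx+a$, a set of at most two points, which is absurd. Hence $(a,b,c)=0$, so $(\lambda_1,\lambda_2,\lambda_3)=0$, proving the independence. It follows that the $\c$-linear map $h\mapsto\big(\int_{\gamma_j}h\,\xi_1,\int_{\gamma_j}h\,\xi_2,\int_{\gamma_j}h\,\xi_3\big)$ is onto $\c^3$, and it remains onto when $h$ is required to vanish at the points of $\Lambda\cup\{p_0\}$ and at the zeros of $u_1u_2$, since the latter lie off $\gamma_j$ and the former are isolated points of the arc.

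With this surjectivity I would first build model data on $C$. Because the arcs and loops $\gamma_1,\dots,\gamma_l$ meet only at $p_0$, for each $j$ I can choose continuous functions $\tilde h^1_j,\tilde h^2_j,\tilde h^3_j$ on $C$ that vanish on $\gamma_{j'}$ for $j'\neq j$, vanish at $p_0$, at $\Lambda$, and at the zeros of $u_1u_2$, and whose images under the above map form a basis of $\c^3$; this is possible exactly because that map is onto. For these model data the Jacobian is block diagonal with invertible diagonal blocks, hence invertible. I would then approximate each $\tilde h^i_j$ uniformly on $C$ by a function $h^i_j\in\Ocal(K)$ that has a zero of multiplicity $2k$ at every point of $\Lambda$ and vanishes at the zeros of $u_1u_2$; this is a Runge--Mergelyan approximation with interpolation of jets on the Runge set $C$ (in the spirit of the Royden-type theorem recalled in Section \ref{sec:interpolation}, see \cite{Royden1967JAM}). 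As the periods in \eqref{eq:Periodofeta} depend continuously on the integrand along the fixed curves, the Jacobian computed with the $h^i_j$ stays invertible, so $\Pcal\circ\psi$ is a submersion at $0$; the statement about the ball $V_0$ is then the inverse function theorem.

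It remains to check that $\psi(\zeta)\in\Spin(K)$ for every $\zeta\in\c^{3l}$, not just near $0$. Writing $g=\sum_{i,j}\zeta^i_j h^i_j$, the pair \eqref{eq:spray0} can violate \eqref{eq:SpinK} at a point $p$ only if $(1+g)u_1$ and $(1+2g)u_2$ both vanish there. If $u_1(p)\neq0\neq u_2(p)$ this needs $1+g(p)=1+2g(p)=0$, which is impossible; and by \eqref{eq:divw} together with the fact that $\eta_1,\eta_2$ have no common zeros (so $u_1,u_2$ have none either), the only remaining candidates are the zeros of $u_1u_2$, where $g(p)=0$ by construction, so that the nonvanishing component survives. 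Thus $\psi$ maps all of $\c^{3l}$ into $\Spin(K)$. I expect the linear-independence step to be the main obstacle: the entire construction of a period-dominating spray rests on $\xi_1,\xi_2,\xi_3$ being independent along each $\gamma_j$, and it is precisely there that the nonflatness of $X$ (equivalently, the nonconstancy of its Gauss map) is indispensable; one must secure this independence simultaneously with the order-$2k$ vanishing demanded by the interpolation and with the global membership in $\Spin(K)$.
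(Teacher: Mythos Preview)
Your proof is correct and follows the same overall architecture as the paper: compute the Jacobian of $\Pcal\circ\psi$ at $\zeta=0$ as $\int_{\gamma_j}h^i_{j'}\,\Xi$, build continuous model functions on $C$ supported on a single $\gamma_j$ so that the Jacobian becomes block-diagonal with invertible blocks, then Mergelyan-approximate with jet interpolation to obtain $h^i_j\in\Ocal(K)$ vanishing to order $2k$ on $\Lambda$ and at the zeros of $u_1u_2$. Your verification that $\psi(\c^{3l})\subset\Spin(K)$ is also the paper's argument, stated more explicitly.

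The one genuine difference is in how you establish that the $3\times3$ block $\big[\int_{\gamma_j}h^i_j\,\xi_s\big]_{s,i}$ can be made invertible. The paper argues pointwise: since $X$ is nonflat one can pick three points $p_j^1,p_j^2,p_j^3\in\gamma_j$ at which the Weierstrass values $\big(\tfrac12(u_1^2-u_2^2),\tfrac{\imath}{2}(u_1^2+u_2^2),u_1u_2\big)$ form a basis of $\c^3$, and then observes that the fixed invertible matrix
\[
\begin{pmatrix}3 & -\imath & 0\\ \imath & 3 & 0\\ 0 & 0 & 3\end{pmatrix}
\]
(acting on the right) carries these to $\Xi/\theta_0$, so the values of $\Xi/\theta_0$ at the same three points also span; narrow bump functions concentrated at the $p_j^i$ with unit integral then do the job. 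You instead show directly that $\xi_1,\xi_2,\xi_3$ are $\c$-linearly independent on $\gamma_j$ by reducing a nontrivial relation to a quadratic polynomial identity in $w=u_2/u_1=\pm g$ and invoking nonconstancy of the Gauss map, and then deduce surjectivity of the integration map by duality. Your route is a bit more intrinsic and avoids the auxiliary matrix, while the paper's makes the model functions completely explicit; the two are equivalent, linked precisely by that $3\times3$ matrix.
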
 
\begin{proof}
Since $X$ is a nonflat conformal minimal immersion, we may choose for any $j=1,\ldots,l$ pairwise distinct points $p_j^i\in\gamma_j$, $i=1,2,3$ (different from the endpoints) such that the vectors 
\begin{equation}\label{eq:basis}
\left\lbrace \bigg(\frac12(u_1^2-u_2^2)
,\frac\I2(u_1^2+u_2^2),
u_1u_2\bigg)(p_j^i)\right\rbrace _{i=1,2,3}
\end{equation}
are a basis of $\c^3$.
Next, for any $i=1,2,3$, we consider continuous functions $h^i_j\colon C\to\c$ such that $h^i_j$ vanishes on $C\setminus \gamma_j$; the value of each $h^i_j$ in $\gamma_j$ will be specified later.
Notice that, so far,  we have defined the functions $h_j^i$'s only in $C$. In particular, the expression $\Pcal\circ \psi$ makes sense in a natural way; see \eqref{eq:Periodofeta}.

Thus, the differential of $\Pcal\circ\psi=(\Pcal_1\circ\psi,\ldots,\Pcal_l\circ\psi)$ with respect to $\zeta^i_j$ is given for any $i=1,2,3$ and $j=1,\ldots,l$ by
\[
	\frac{\partial \Pcal_{m}\circ\psi}{\partial \zeta^i_j}\bigg|_{\zeta=0}(\zeta)=\left\lbrace 
	\begin{matrix}
	(0,0,0) & j\neq m\\
	\\
	\displaystyle\int_{\gamma_j}h^i_j\big(u_1^2-2u_2^2,\I (u_1^2+2u_2^2),3u_1u_2\big)\, \theta_0 & j=m
	\end{matrix}\right. 
\]
for any $m=1,\ldots,l$.

We claim that we may choose the functions $h_j^i$ so that the vectors $\frac{\partial \Pcal_j\circ\psi}{\partial \zeta^i_j}\big|_{\zeta=0}$, $i=1,2,3$, expand $\c^3$, and so the differential of $\psi$ at $\zeta=0$ is surjective. Indeed, we parametrize each curve by $\gamma_j\colon [0,1]\to \gamma_j\subset K$ (we identify the image $\gamma_j([0,1])\equiv \gamma_j$) and call $t_j^i\in(0,1)$ the point such that $\gamma_j(t_j^i)=p_j^i$ for $i=1,2,3$. Then, take a positive number $\tau>0$ small enough such that 
\[
	[t_j^{i}-\tau,t_j^{i}+\tau]\subset (0,1)\quad \text{and} \quad	[t_j^{i_1}-\tau,t_j^{i_1}+\tau]\cap[t_j^{i_2}-\tau,t_j^{i_2}+\tau]=\varnothing
\]
for any $i,i_1\neq i_2\in\{1,2,3\}$
and define the function $h_j^i$ such that 
\begin{equation}\label{eq:joaq}
h_j^i=0 \quad \text{everywhere on $[0,1]\setminus [t_j^{i}-\tau,t_j^{i}+\tau]$}
\end{equation}
and
\[
\int_{0}^1h^i_j(t) \,dt = \int_{t^i_j-\tau}^{t^i_j+\tau}h^i_j(t) \,dt=1.
\]
Therefore, we may ensure that for sufficiently small $\tau>0$, we have that the integral
\[
\int_{\gamma_j}h^i_j\big(u_1^2-2u_2^2,\I (u_1^2+2u_2^2),3u_1u_2\big)\, \theta_0
\]
takes approximately the value
\[
\big((u_1^2-2u_2^2)
,\I(u_1^2+2u_2^2),
3u_1u_2\big)(\gamma_j(t_j^i)) \ \theta_0(\gamma_j(t_j^i),\dot\gamma_j(t_j^i)).
\]
Finally, since $\theta_0$ does not vanish anywhere on $C\subset K$ and the vectors in equation \eqref{eq:basis} are a basis of $\c^3$, then the vectors $\left\lbrace \big(u_1^2-2 u_2^2,\I(u_1^2+2 u_2^2),3u_1 u_2\big)(p_j^i)\right\rbrace_{i=1,2,3}$ expand $\c^3$ for all $j=1,\ldots,l$. Indeed, notice that
\[
	\big(\frac12(u_1^2-u_2^2)
	,\frac\I2(u_1^2+u_2^2),
	u_1u_2\big)
	\left( \begin{matrix}
	3 & -\I & 0\\
	\I & 3 & 0\\
	0 & 0 & 3\\
	\end{matrix}\right)= 
	\big(u_1^2-2 u_2^2,\I(u_1^2+2 u_2^2),3u_1 u_2\big).
\]
This shows that $\Pcal\circ\psi$ is a submersion at $\zeta=0$.

To finish the proof we claim that we may assume that each $h_j^i$ is a holomorphic function $h_j^i\colon K\to\c$, vanishes at all zeros of $u_1 u_2$, and  has a zero of multiplicity at least $2k$ at any point of $\Lambda$. 
Indeed, by Mergelyan Theorem with jet-interpolation (see \cite{Forstneric2017}) we may approximate each $h_j^i$ by a holomorphic function $K\to\c$. Since each $h_j^i$ vanishes on a neighborhood of $\Lambda$ in $C$ (see \eqref{eq:joaq}), then the approximating function may be chosen {\em to vanish at all zeros of $u_1 u_2$} and to have a zero of any order at any point of $\Lambda$ (in particular of order $2k$). The former and the fact that $(u_1,u_2)\in \Spin (K)$ ensure that the map $\psi$ defined in \eqref{eq:spray0} assumes values in $\Spin (K)$. Furthermore, if the approximation is close enough then the corresponding spray is also period dominating. 
\end{proof}

We now prove the following generalization of Royden's theorem.

\begin{lemma} \label{le:polos}
Let $A\subset \Sigma \setminus E$ be a Runge compact subset, let $f\in \Mgot(A)$ be a meromorphic function,  choose $m \in \n$, take an integral divisor $D\in \Div(A)$,  and fix $\delta>0$.  Then there exists $\wt f\in \Mgot(\Sigma)\cap  \Ocal(\Sigma \setminus (A\cup E))$ such that $\wt f$ has a pole of order greater than or equal to $m$ at all points in  $E$, $(\wt f|_A-f)\geq D$,  and $|\wt f-f|<\delta$ on $A$.
\end{lemma}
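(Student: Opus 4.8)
The plan is to reduce the statement to the version of Royden's theorem quoted above (\cite[Theorem 10]{Royden1967JAM}) by first stripping the poles of $f$ so as to be in the holomorphic setting Royden requires, and then correcting the result by a small term that forces the prescribed poles at $E$. Throughout I write $\Lambda:=\supp(D)=\{q_1,\dots,q_t\}\subset A$, a finite set since $\wt f|_A-f$ will be meromorphic and not identically zero on the compact $A$, and let $n_j\ge 0$ be the multiplicity of $q_j$ in $D$; set $k:=\max_j n_j$. Note that $A$ being Runge in $\Sigma\setminus E$ means precisely that every connected component of $\Sigma\setminus A$ meets $E$, which is exactly the hypothesis under which Royden's theorem applies with $K=A$ and interpolation set $\Lambda$.

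First I would dispose of the poles of $f$. Let $a_1,\dots,a_s\in A$ be the poles of $f$, with principal parts $P_1,\dots,P_s$. By the Riemann--Roch theorem (equivalently Mittag--Leffler on $\Sigma$, whose $H^1$-obstruction is killed by allowing sufficiently high pole order at the points of $E$) there is $h\in\Mgot(\Sigma)$, holomorphic on $\Sigma\setminus(E\cup\{a_1,\dots,a_s\})$, whose principal part at each $a_j$ equals $P_j$. Then $f-h$ is holomorphic on a neighborhood of $A$, and Royden's theorem furnishes $F_0\in\Mgot(\Sigma)\cap\Ocal(\Sigma\setminus E)$ with $(F_0-(f-h))\ge D$ on $A$ and $|F_0-(f-h)|<\delta/2$ on $A$ (applied with interpolation order $k$ at each point of $\Lambda$, which yields vanishing to order $\ge k\ge n_j$ at $q_j$). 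Setting $F:=F_0+h$ gives $F\in\Mgot(\Sigma)\cap\Ocal(\Sigma\setminus(A\cup E))$ with $(F-f)\ge D$ and $|F-f|<\delta/2$ on $A$, because near each $a_j$ the difference $F-f=F_0-(f-h)$ is holomorphic.

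It remains to force poles of order $\ge m$ at every point of $E$. The function $F$ has some finite pole orders $\mathrm{ord}_e F$ at the points $e\in E$; set $M_e:=\max\{m,\mathrm{ord}_e F+1\}$. Using Riemann--Roch once more I would construct $g_0\in\Mgot(\Sigma)\cap\Ocal(\Sigma\setminus E)$ having a genuine pole of order $M_e$ at each $e\in E$ and a zero of order $\ge n_j$ at each $q_j\in\Lambda$: the linear system of meromorphic functions with polar divisor bounded by $\sum_{e\in E}M_e\,e$ and vanishing to order $\ge n_j$ at each $q_j$ has, for the $M_e$ large, dimension at least two, so a generic member avoids the finitely many proper subspaces on which the top-order coefficient at some $e$ vanishes and thus has full pole order $M_e$ at every $e$ simultaneously. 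Finally choose $\epsilon>0$ with $\epsilon\sup_A|g_0|<\delta/2$ and set $\wt f:=F+\epsilon g_0$. Then $\wt f\in\Mgot(\Sigma)\cap\Ocal(\Sigma\setminus(A\cup E))$; at each $e\in E$ the pole of $\epsilon g_0$ strictly dominates that of $F$, so $\mathrm{ord}_e\wt f=M_e\ge m$ with no cancellation; on $A$ both $F-f$ and $\epsilon g_0$ vanish to order $\ge n_j$ at $q_j$, whence $(\wt f|_A-f)\ge D$; and $|\wt f-f|\le|F-f|+\epsilon|g_0|<\delta$ on $A$.

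The main obstacle is the forced-pole condition at $E$: unlike Royden's theorem it requires producing genuine poles at prescribed points while keeping the correction uniformly small on $A$, and the delicate point is to rule out accidental cancellation of leading coefficients at the ends. This is precisely why I arrange $g_0$ to strictly dominate $F$ in pole order at each end and why the Riemann--Roch count is carried out simultaneously over all points of $E$; by contrast, the reduction of the meromorphic $f$ to the holomorphic case and the interpolation along $D$ are routine once Royden's theorem is available.
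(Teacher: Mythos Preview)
Your proof is correct and follows the same two-step skeleton as the paper's: first approximate $f$ via Royden's theorem with the divisor constraint along $D$, then add a small correction to force genuine poles of high order at every point of $E$. The implementations, however, differ in both steps.

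For the approximation step, you first strip the poles of $f$ by a global Mittag--Leffler/Riemann--Roch function $h$ and then apply the holomorphic version of Royden's theorem to $f-h$. The paper instead invokes Royden's theorem directly in its meromorphic form to obtain $f_1\in\Mgot(\Sigma)\cap\Ocal(\Sigma\setminus(A\cup E))$ with $(f_1|_A-f)\ge D$ and $|f_1-f|<\delta/2$. Your detour is harmless and has the virtue of matching the version of Royden's theorem actually quoted earlier in the paper.

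For the pole-forcing step, the paper does something slicker than your $\epsilon g_0$ construction. It applies Royden once more, for each $q\in E$, to the zero function to get $f_q\in\Mgot(\Sigma)\cap\Ocal(\Sigma\setminus\{q\})$ with $|f_q|<\delta/(2\#E)$ on $A$, $(f_q)\ge D$, and a pole at $q$; sets $f_0:=\sum_{q\in E}f_q$; and then takes $\wt f:=f_1+f_0^{\,n}$ with $n=m+m_0$, where $m_0$ is the maximal pole order of $f_1$ on $E$. The power trick handles everything at once: since $|f_0|<\delta/2<1$ on $A$ one has $|f_0^{\,n}|<\delta/2$; since $(f_0|_A)\ge D$ one has $(f_0^{\,n}|_A)\ge nD\ge D$; and since $f_0$ has a genuine pole at each $q\in E$, $f_0^{\,n}$ has pole order $\ge n>m_0$ there, so it strictly dominates the pole of $f_1$ and no cancellation can occur. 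This replaces your Riemann--Roch genericity argument for producing exact pole orders by a one-line observation.

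A minor remark: your justification that $\supp(D)$ is finite (``since $\wt f|_A-f$ will be meromorphic and not identically zero'') is circular, as $\wt f$ is what you are constructing. In the paper's applications $D$ always has finite support, and for the lemma itself this is the intended hypothesis; it does not affect the correctness of your argument.
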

\begin{proof} Without loss of generality   suppose that $\delta<1$. Denote by $r$  the number of points in $E$. By Royden's Theorem  \cite[Theorem 10]{Royden1967JAM}, for each $q\in E$  we can find $f_q\in \Mgot(\Sigma)\cap \Ocal(\Sigma\setminus \{q\})$ such that $|f_q|<\delta/2r$ on $A$,  $(f_q)\geq D$, and having a pole of positive order at $q$. Call $f_0=\sum_{q\in E} f_q$, and observe that $|f_0|<\delta/2$ on $A$,  $(f_0|_A)\geq D$, and $f_0$ has  a pole of positive order at all points of  $E$. Likewise, Royden's theorem provides $f_1\in \Mgot(\Sigma)\cap \Ocal(\Sigma\setminus (A\cup E))$ such that $|f-f_1|<\delta/2$ and  $(f_1|_A-f)\geq D$. Label by $m_0\in \n$ the maximum order of the poles of $f_1$ at points of $E$. It suffices to set $\wt f:=f_1+ f_0^n$, where $n=m_0+m$.
\end{proof}

Fix $\rho>0$ to be specified later. 

Choose pairwise disjoint compact discs $U_q\subset \Sigma\setminus (K\cup E)$, $q\in\supp(\theta_0|_{\Sigma\setminus E})$, with $q\in\mathring U_q$; recall that $\theta_0$ vanishes nowhere on $K$, see Lemma \ref{lem:theta}. Set $K_0=\bigcup_{q\in \supp(\theta_0)_0} U_q$ and extend $u_1$ to  a    meromorphic function $u_1\colon K\cup K_0\to \c$ such that  $u_1^2 \theta_0$ is holomorphic and vanishes nowhere on $K_0$ (recall that $\theta_0$ is spinorial on $\Sigma \setminus E$).  Fix another number $\rho'>0$ which will be specified later. If $\rho'>0$ is sufficiently small, then Lemma \ref{le:polos} applied to the Runge set $K\cup K_0$, $u_1$, a large enough integer $m\in \n$, $D=(u_1)^2 D_{\Lambda}^{2 k}$, and $\rho'>0$, where $D_\Lambda=\prod_{q\in \Lambda} q\in \Div(K)$, provides a meromorphic function $\wt u_1\colon\Sigma\to\c$,  holomorphic on $\Sigma\setminus (E\cup \supp(\theta_0)_0)$, satisfying the following properties:
\begin{enumerate}[\rm ({A}.1)]
	\item $(\wt u_1|_K)=(u_1|_K)$,
	\smallskip
	\item $\wt u_1-u_1$ has a zero of multiplicity at least  $2k$ at all points of $\Lambda$,
	\smallskip
	\item $|\wt u_1(\zeta)-u_1(\zeta)|<\rho'$ for all $\zeta\in K\cup K_0$, and
	\smallskip
	\item $\wt u_1^2 \theta_0$ is holomorphic on $\Sigma\setminus E$, vanishes nowhere on $K_0$, and has a pole of positive order at all points of $E$.
\end{enumerate}
Notice that in order to ensure {\rm (A.1)} and {\rm (A.2)} we are using that $\rho'>0$ is small enough, and taking into account classical Hurwitz's theorem.  Also, to guarantee {\rm (A.4)} we are using {\rm (A.3)} and the definition of $u_1$ on $K_0$, and assuming that $m$ is large enough.

Consider the finite set $K_1:=\{p\in \Sigma\setminus K \colon \wt u_1(p)=0\}$ and observe that $K_1\cap E= \varnothing$ (take into account that $\wt u_1$ has poles of order at least $m\geq 1$ at all points of $E$), hence $K_1\subset \Sigma\setminus (K\cup K_0\cup E)$. Extend $u_2$ to a meromorphic function  $u_2\colon K\cup K_0\cup K_1\to \c$  satisfying $u_2=u_1$ on $K_0$ and $u_2=1$ on $K_1$.
As above,   Lemma \ref{le:polos} provides a meromorphic function $\wt u_2\colon\Sigma\to\c$,  holomorphic on $\Sigma\setminus (E\cup \supp (\theta_0)_0)$, satisfying the following properties:
\begin{enumerate}[\rm ({B}.1)]
	\item $(\wt u_2|_K)=(u_2|_K)$,
	\smallskip
	\item $\wt u_2-u_2$ has a zero of multiplicity at least $2k$ at all points of $\Lambda$, 
	\smallskip
	\item  $|\wt u_2(\zeta)-u_2(\zeta)|<\rho'$ for all $\zeta\in K\cup K_0\cup K_1 $, and
	\smallskip
	\item $\wt u_2^2 \theta_0$ is holomorphic on $\Sigma\setminus E$, vanishes nowhere on $K_0\cup K_1$, and has an effective pole at all points of $E$.
\end{enumerate}
By  {\rm (A.1)}, {\rm (A.4)}, {\rm (B.1)}, {\rm (B.3)}, and {\rm (B.4)}, 
\begin{equation}\label{eq:B3}
\text{$(\wt u_1^2,\wt u_2^2)\theta_0$ is holomorphic on $\Sigma\setminus E$ and does not assume the value $(0,0)$},
\end{equation}
provided that $\rho'>0$ is sufficiently small.
Next, set 
\begin{equation}\label{eq:wteta}
\wt \eta_j:=\wt u_j^2 \theta_0,\quad j=1,2.
\end{equation}

Summarizing:
\begin{enumerate}[\rm (a)]
	\item $\wt \eta_j$ is holomorphic and spinorial on $\Sigma\setminus E$, $j=1,2$.
	\smallskip
	\item The pair $(\wt \eta_1,\wt \eta_2)$ is spinorially equivalent on $\Sigma\setminus E$; in fact, $\wt \eta_j$ is spinorially equivalent to $\eta_1$ (and $\eta_2$) on $K$, $j=1,2$ (recall that $K$ is a strong deformation retract of $\Sigma\setminus E$).
	\smallskip
	\item $\wt \eta_j-\eta_j$ has a zero of multiplicity at least $2k$ at all points of $\Lambda$.
	\smallskip
	\item $\wt \eta_j$ has an effective pole at any point of $E$, $j=1,2$; take into account ({\rm A}.4) and ({\rm B}.4).
	\smallskip
	\item $|(\wt \eta_j-\eta_j)/\theta_0|<\rho$  everywhere on $K$. For that we use {\rm (A.3)} and {\rm (B.3)} and assume that $\rho'>0$ is chosen sufficiently small.
	\smallskip
	\item $(\wt \eta_j|_K)=(\wt u_j^2|_K)=(u_j^2|_K)=(\eta_j)$, $j=1,2$, and $\wt \eta_1$ and $\wt \eta_2$ have no common zeros on $\Sigma\setminus  E$.
\end{enumerate}


Let $h_j^i\colon K\to\c$, $j=1,\ldots,l$, $i=1,2,3$, be the holomorphic functions given by Lemma \ref{lem:spray}.
Applying Lemma \ref{le:polos} once again to $A=K$, $m=2k$, $D=\prod_{p\in \Lambda} p^{2 k}$, and $\delta =\rho$,  we get functions $g_j^i\in \Mgot(\Sigma)\cap \Ocal(\Sigma\setminus E)$,  $j=1,\ldots,l$, $i=1,2,3$, meeting the following properties:
\begin{enumerate}[\rm (I)]
	\item $|g_j^i(\zeta)-h_j^i(\zeta)|<\rho$ for all $\zeta\in K$.
	\smallskip
	\item $g_j^i-h_j^i$ has a zero of multiplicity at least $2k\in\n$ at any point $p\in\Lambda\subset K$. This is equivalent to that $g_j^i$ has a zero of multiplicity at least $2k\in\n$ at any point $p\in\Lambda\subset K$ (see Lemma \ref{lem:spray}).
	\smallskip
	\item $g_j^i$ has a zero at any point of $(\Sigma\setminus E)\cap \big(\supp (\wt \eta_1) \cup \supp (\wt \eta_2)\big)$.
	\item $g_j^i$ has a pole of order at least $2k$ at any point of $E$ for all $j$ and $i$.
	\end{enumerate}
 
Replacing  in \eqref{eq:spray0} the functions $h_j^i$, $u_1$, and $u_2$ by their corresponding approximations, namely, $g_j^i$, $\wt u_1$, and $\wt u_2$, we get the holomorphic spinorial spray on $K$ 
 \[
\wt\psi=(\wt\psi_1 ,\wt\psi_2 )\colon \c^ {3 l}\to \big(\Mgot(\Sigma)\cap \Ocal(\Sigma\setminus E)\big)^2\cap \Spin(K),
\]
given by 
\[
		\wt\psi(\zeta)= \bigg((1+\sum_{i=1}^{3}\sum_{j=1}^{l}\zeta^i_j g^i_j) \wt u_1,  (1+2\sum_{i=1}^{3}\sum_{j=1}^{l}\zeta^i_j g^i_j) \wt u_2 \bigg), \quad \zeta\in \c^{3 l},
\]
with  core $(\wt u_1,\wt u_2)$.
Set  
\begin{equation}\label{eq:spraypsi}
	({\wt \eta}_{\zeta,1},{\wt \eta}_{\zeta,2}):=
	\bigg((1+\sum_{i=1}^{3}\sum_{j=1}^{l}\zeta^i_j g^i_j)^2 
	{\wt \eta}_1, (1+2\sum_{i=1}^{3}\sum_{j=1}^{l}\zeta^i_j g^i_j)^2 {\wt \eta}_2 \bigg)
\end{equation}
and notice that ${\wt \eta}_{\zeta,j}=\wt\psi_j(\zeta)^2\theta_0$, $j=1,2$; see \eqref{eq:wteta}. 
Moreover: 
\begin{enumerate}[{\rm (A)}]
\item   ${\wt \eta}_{\zeta,j}$ is a spinorial holomorphic $1$-form on $\Sigma\setminus E$, i.e., ${\wt \eta}_{\zeta,j}\in \Ygot(\Sigma\setminus E)$, $j=1,2$. Furthermore, ${\wt \eta}_{\zeta,1}$ and  ${\wt \eta}_{\zeta,2}$ are meromorphic on $\Sigma$ and spinorially equivalent on $\Sigma\setminus E$ and have no common zeros on $\Sigma\setminus E$, and either ${\wt \eta}_{\zeta,1}$ or ${\wt \eta}_{\zeta,2}$ has an effective pole at any point of $E$; use properties {\rm (a)}, {\rm (b)}, {\rm (d)}, {\rm (f)},  {\rm (III)},  and equation  \eqref{eq:spraypsi}. Take into account that, obviously, $1+\sum_{i=1}^{3}\sum_{j=1}^{l}\zeta^i_j g^i_j$ and $1+2\sum_{i=1}^{3}\sum_{j=1}^{l}\zeta^i_j g^i_j$ have no common zeros.
\smallskip

\item If $\rho>0$ is chosen small enough,  the period map $\wt \Psf \colon \c^{3 l}\to \c^{3l}$, $\wt \Psf(\zeta)  =  \Pcal\big(\wt \psi(\zeta)\big)$, see \eqref{eq:Periodofeta}, is period dominating at $\zeta=0$ in the sense that 
$\wt \Psf $  is a submersion at that point; take into account Lemma \ref{lem:spray}. Therefore, there is an Euclidean ball $V\subset \c^{3 l}$ centered at the origin, and depending on $\rho$, such that $\wt \Psf \colon V\to \wt \Psf(V)$ is a biholomorphism; see  {\rm (e)}, \eqref{eq:wteta}, and {\rm (I)}.
\smallskip

\item Since $\wt \Psf$ approximates $\Pcal\circ \psi$ uniformly on compact subsets of $\c^{3 l}$ as $\rho$ goes to $0$, $\psi$ is period dominating, and $\Pcal(\psi(0))=0$, then there is $\zeta_\rho\in V$, which goes to $0$ as $\rho\to 0$, such that $\wt \Psf(\zeta_\rho)=0$; take into account {\rm (B)} and the definition of $\wt \Psf$.
\smallskip

\item  ${\wt \eta}_{\zeta_\rho,j}/\theta_0$ approximates $\eta_j/\theta_0$ uniformly on $K$ as $\rho$ goes to $0$. Take into account {\rm (e)} and the fact that $\lim_{\rho\to 0} \zeta_\rho=0$; see {\rm (C)}.
\smallskip

\item ${\wt \eta}_{\zeta,j} -\eta_j$ has a zero of multiplicity at least $2 k$ at any point of $\Lambda$ for all $\zeta\in \c^{3l}$; see {\rm (c)} and {\rm (II)}.
\end{enumerate}

By properties {\rm (A)} and {\rm (C)}, the Weierstrass data $\wt\Phi_{\zeta_\rho}:=\Phi(\wt\psi(\zeta_\rho))$ defined as in equation \eqref{eq:phioffunctions} provides a complete conformal minimal immersion 
\[
	X_\rho(p)=X(p_0)+\Re\int_{p_0}^{p}\wt \Phi_{\zeta_\rho}.
\]
Note that $X_\rho$ is well defined since the periods of $\wt\Phi_{\zeta_\rho}$ on $K$ are those of the Weierstrass data of $X$ and $K$ is a strong deformation retract of $\Sigma\setminus E$.
The completeness is ensured by the fact that either ${\wt \eta}_{\zeta,1}$ or ${\wt \eta}_{\zeta,2}$ has an effective pole at any point of $E$.

We claim that the immersion $\wt X:=X_\rho$ solves the theorem provided that $\rho>0$ is small enough. 
Indeed, by the aforementioned argument, $X_\rho$ has the same flux map as $X$; this gives  Theorem \ref{th:main-1}-{\rm (iv)}.  By property {\rm (A)}, the immersion $X_\rho$ is of finite total curvature, proving condition  Theorem \ref{th:main-1}-{\rm (i)}. Condition  Theorem \ref{th:main-1}-{\rm (ii)} follows from {\rm (D)} for $\rho$ small enough. Finally, if $p_j\in \Lambda$ and $\gamma_j$ is the arc in $C$ joining $p_0$ and $p_j$, then 
\begin{eqnarray*}
	X_\rho(p_j) & = & X(p_0)+\Re\int_{p_0}^{p_j}\wt \Phi_{\zeta_\rho}
	\; = \; X(p_0)+ \Re\int_{\gamma_j} \wt \Phi_{\zeta_\rho} \; \stackrel{{\rm (C)}}{=}
	 \\
	 & \stackrel{{\rm (C)}}{=} & X(p_0)+ \Re\int_{\gamma_j} \Phi(u_1,u_2)
	 \; = \; X(p_0)+ \Re\int_{p_0}^{p_j} \Phi(u_1,u_2) \; =\; X(p_j).
\end{eqnarray*}
Together with  {\rm (E)} we infer Theorem \ref{th:main-1}-{\rm (iii)}.

This completes the proof of Theorem \ref{th:main-1}.
Theorem \ref{th:intro} is proved.


\section{Optimal hitting: proof of Theorem \ref{th:main-intro-2}}\label{sec:hitting}

Let $X\colon M\to \r^3$ be a complete  conformal minimal  immersion of FTC, where $M$ is an open Riemann surface. We know that $M=\Sigma\setminus E$, where $\Sigma$ is a compact Riemann surface and $E\subset \Sigma$ is  a non-empty finite subset. Denote  by   $N\colon M\to \s^2$ the Gauss map of $X$ which is compatible with the orientation on $M$,  and recall that $N$ extends conformally to $\Sigma$. 

%
%

Before going into the proof of the theorem some preparations are needed.

By definition, a symmetry of $X\colon M=\Sigma\setminus E\to\r^3$ is a rigid motion  $\Rcal\colon \r^3\to \r^3$ such that $\Rcal(X(M))=X(M)$. Every symmetry $\Rcal$ of $X$ induces a conformal transformation $\Psi \colon M\to M$ satisfying $\Rcal \circ X=X\circ \Psi $.   Such a map $\Psi $ could be either orientation  preserving or   orientation reversing, and extends to a conformal automorphism $\Psi_\Rcal\colon \Sigma\to \Sigma$ leaving invariant  $E$.
By analyticity, any affine line $L\subset\r^3$ for which  $X(M)\cap L$ consists of infinitely many points is contained in $X(M)$. If $L$ is an affine line and $L\subset X(M)$, then Schwartz's reflection principle implies that  $X(M)$ is invariant under the reflection $\Rcal_L\colon\r^3\to\r^3$ about $L$. Furthermore, $\Rcal_L$ has an associated conformal transformation, namely  $\Psi_L$, with infinitely many fixed points on $\Sigma$ (hence antiholomorphic). 

\begin{proposition}\label{pro:isomet}
The only complete orientable minimal surfaces of finite total curvature with an infinite group of symmetries are the planes and the catenoids. In particular,   every complete {\em non-flat}  minimal surface of finite total curvature contains at most a finite number of straight lines.
\end{proposition}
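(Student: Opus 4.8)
The plan is to show that an infinite symmetry group forces invariance under a positive-dimensional \emph{compact} group of ambient isometries, and then to invoke the classical classification of rotationally symmetric minimal surfaces. Throughout, let $X\colon M=\Sigma\setminus E\to\r^3$ be a complete orientable minimal surface of finite total curvature with empty boundary, and let $G$ denote its group of symmetries. First I would dispose of the flat case: a complete flat minimal surface is totally geodesic, hence a plane, so from now on I assume $X$ is non-flat and aim to prove it is a catenoid whenever $G$ is infinite. By Osserman's theorem (Section \ref{ss:FTC}, \cite{Osserman-book}) the Gauss curvature $K$ of $X$ is real-analytic, integrable, not identically zero, and tends to $0$ at each end of $M$; hence for a suitable $\epsilon\in(0,\sup_M|K|)$ the superlevel set $F:=\{p\in M\colon|K(p)|\ge\epsilon\}$ is a nonempty \emph{compact} subset of $M$, since it avoids punctured neighborhoods of the points of $E$.

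The crucial point is that $F$ is invariant under $G$. Each symmetry $\Rcal\in G$ is an ambient isometry and induces a conformal transformation $\Psi=\Psi_\Rcal\colon M\to M$ with $\Rcal\circ X=X\circ\Psi$; since $X$ is an isometric immersion, $\Psi^\ast ds^2=(\Rcal\circ X)^\ast(\text{Eucl})=ds^2$, so $\Psi$ is an intrinsic isometry of $(M,ds^2)$ and therefore preserves the Gauss curvature, $K\circ\Psi=K$. Thus $\Psi(F)=F$, and consequently the compact set $X(F)\subset\r^3$ satisfies $\Rcal(X(F))=X(\Psi(F))=X(F)$ for every $\Rcal\in G$. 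Fixing $p_0\in X(F)$, the orbit $G\cdot p_0$ lies in the bounded set $X(F)$; writing each $\Rcal$ as $x\mapsto A_\Rcal x+v_\Rcal$ with $A_\Rcal\in O(3)$ and $v_\Rcal\in\r^3$, the boundedness of $\{A_\Rcal p_0+v_\Rcal\}$ together with $|A_\Rcal p_0|=|p_0|$ forces the translation parts $v_\Rcal$ to be bounded as well. Hence $G$ is a bounded subset of $\mathrm{Iso}(\r^3)=O(3)\ltimes\r^3$, so its closure $\overline G$ is a \emph{compact} subgroup preserving $X(M)$; being infinite, $\overline G$ has positive dimension. The main obstacle is precisely this passage from ``infinite'' to ``compact and positive-dimensional'', which is why the curvature superlevel set $F$, a canonical compact $G$-invariant set, is introduced; everything afterwards is classical.

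To conclude I would analyze the identity component $\overline G^{\,0}$. Since $\overline G$ is compact it fixes a point of $\r^3$ (e.g.\ the circumcenter of an orbit), so after a translation $\overline G^{\,0}$ is a positive-dimensional connected subgroup of $SO(3)$, that is, either $SO(3)$ or a one-parameter group $SO(2)$ of rotations about some axis $\ell$. The case $SO(3)$ is impossible: $\overline G^{\,0}$ permutes the nonempty finite set of limit normals $\{N(q)\colon q\in E\}\subset\s^2$, whereas $SO(3)$ acts transitively on $\s^2$ and thus has only infinite orbits. Therefore $\overline G^{\,0}=SO(2)$ and $X(M)$ is a surface of revolution about $\ell$; by the classical theorem of Euler the only complete minimal surfaces of revolution are the plane and the catenoid, and since $X$ is non-flat it is a catenoid. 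This proves the first assertion.

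Finally, for the last statement I would argue by contradiction. If a non-flat $X$ contained infinitely many straight lines, then, as recalled just before the statement, each line $L\subset X(M)$ gives the symmetry $\Rcal_L\in G$ defined by the half-turn (reflection) about $L$; since a half-turn determines its axis, distinct lines yield distinct elements $\Rcal_L$, so $G$ would be infinite. By the first part $X$ would then be a plane or a catenoid; the plane is excluded because $X$ is non-flat, and the catenoid contains no straight line, a contradiction. Hence every complete non-flat minimal surface of finite total curvature contains at most finitely many straight lines.
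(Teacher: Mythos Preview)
Your proof is correct and follows the same opening move as the paper---using a compact curvature-defined subset of $M$ (you take a superlevel set $\{|K|\ge\epsilon\}$, the paper takes the minimum set $\{K=\min K\}$) to get a nonempty compact $G$-invariant set in $\r^3$---but from there the arguments diverge. The paper observes that $G$ is closed in $\mathrm{Iso}(\r^3)$ and splits into two cases: if $G$ is not discrete it contains a one-parameter subgroup, which must consist of rotations because it preserves a compact set, and one is done; if $G$ is discrete and infinite, the paper looks at the stabiliser $G_p$ of an end $p\in E$, shows its linear part is finite (it preserves the line $\r N(p)$), and concludes that $G_p$ contains a screw motion or glide reflection, contradicting the invariance of the compact set. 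Your route is more direct: from the compact invariant set you bound the translation parts of $G$, deduce that $\overline G$ is a compact (hence Lie) subgroup of $\mathrm{Iso}(\r^3)$, and use that an infinite compact Lie group has positive dimension; then you identify the identity component as $SO(2)$ by ruling out $SO(3)$ via the finite set of limit normals. This avoids the discrete/non-discrete dichotomy and the end-stabiliser analysis entirely. Both approaches are short; yours trades the paper's hands-on argument about screw motions for the general structure theory of compact Lie groups, which is perhaps conceptually cleaner though it imports a bit more machinery. One small point worth making explicit in your write-up: the closure $\overline G$ still acts on $X(M)$ because $X$ is proper (Jorge--Meeks), so $X(M)$ is closed in $\r^3$ and symmetries pass to limits.
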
  
We think that Proposition \ref{pro:isomet} is well-known, we include here a proof since we have been unable to find a precise reference in the literature.
\begin{proof} 
Let $X\colon  M=\Sigma\setminus E\to \r^3$ be a complete non-flat  conformal minimal  immersion of FTC,  where $\Sigma$ is a compact Riemann surface and $E\subset \Sigma$ is a non-empty finite subset. Since the Gauss curvature $K\colon M\to \r$ is non-positive and $K(p)\to 0$ as $p\to E$, the set  $C=\{p\in M\colon K(p)=\min_\Sigma K<0\}$ is non-empty and compact; recall that $K$ does not vanish everywhere on $M$ since $X$ is nonflat.  Call $G$ the symmetry group of $X$ and notice that $G$  leaves $C$ invariant. Furthermore, $G$ is a closed subgroup of the Lie group of rigid motions in $\r^3$.

If we assume that   $G$ is not discrete, then $G$ contains a 1-parametric subgroup, $G_0$,  leaving  invariant the compact set $X(C)$. Therefore, $G_0$ consists of rotations, and so $X(M)$ is the catenoid. This concludes the proof under this assumption. 

To finish it remains to show that $G$ is not discrete. Indeed, assume that $G$ is discrete and recall that, by assumption, $G$ is infinite. Fix an end $p\in E$, and denote by $G_p$ the subgroup of $G$ consisting of those isometries inducing a conformal transformation in $\Sigma$ that fixes $p$.  Clearly $G_p\neq \varnothing$ is discrete and infinite as well; for the latter recall that $E$ is finite and every symmetry in $G$ leaves $E$ invariant.  As above, $G_p$ leaves $X(C)$ invariant, hence its associated group of linear isometries $\vec G_p$  is discrete as well. Denote by  $N\colon \Sigma\to \s^2$ the extended Gauss map of $X$, and observe that every isometry in $\vec G_p$ leaves invariant the vectorial direction $L_p$ generated by $N(p)$. We infer that $\vec G_p$ must be a finite group of linear isometries leaving   $L_p$ invariant, and so that  $G_p$ must contain either a screw motion or a sliding symmetry. This contradicts that $C$ is invariant under $G_p$.
\end{proof}

%
%


\begin{proof}[Proof of Theorem \ref{th:against}]  
If $X$ is flat then the conclusion of the theorem trivially holds. 

Assume that $X$ is nonflat. As above we put $M=\Sigma\setminus E$, where $\Sigma$ is a compact Riemann surface and $E\subset \Sigma$ is  a non-empty finite subset, and call   $N\colon \Sigma \to \s^2$ the extended Gauss map of $X$ compatible with the orientation on $M$. Since $L\not\subset X(M)$ we have that $X^{-1}(L)\subset M$ is a finite subset, by analyticity.

Denote by $\Gcal$ the Grassmanian manifold of all affine lines in $\r^3$. By Proposition \ref{pro:isomet}, $X(M)$ contains at most finitely many lines in $\Gcal$. Denote by $\Gcal_0$ the subset of $\Gcal$ consisting of those affine lines $T$ in $\r^3$ which satisfy the following properties:
\begin{itemize}
 \item[{\rm a)}] $Q_T:=\{p\in M\colon N(p)\bot T\}$ is compact, and hence $T\not\subset X(M)$. 
 \smallskip
\item[{\rm b)}] The Gauss curvature $K\colon M\to \r$ of $X$ vanishes nowhere on  $Q_T$, that is to say, $N$ is a local diffeomorphism around points of $Q_T$.
\smallskip
\item[{\rm c)}]  $X^{-1}(T)\cap Q_T=\varnothing$.
\end{itemize}
Clearly $\Gcal_0$   is open and dense in $\Gcal$.  Concerning   {\rm c)}, notice that $Q_T$ only depends on the direction vector of $T$.

Let us show the following reduction.
%
%
\begin{claim}\label{cl:abc}
It suffices to prove the theorem under the extra assumption that $L\in\Gcal_0$.
\end{claim}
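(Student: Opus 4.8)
The plan is to prove the theorem for a general line $L\not\subset X(M)$ by a limiting argument from lines in $\Gcal_0$. Write $X^{-1}(L)=\{p_1,\dots,p_k\}$, a finite set by analyticity, so $k=\#(X^{-1}(L))$. Since $\Gcal_0$ is open and dense in $\Gcal$, I can seek a line $T\in\Gcal_0$ as close to $L$ as desired; if in addition I arrange $\#(X^{-1}(T))\ge k$, then, \emph{granting the theorem for lines in} $\Gcal_0$, I obtain $k\le\#(X^{-1}(T))\le 6\,{\rm Deg}(N)+\chi(M)$, which is exactly the asserted bound for $L$. Thus the whole content of the reduction is the lower semicontinuity statement: \emph{there exists $T\in\Gcal_0$ arbitrarily close to $L$ with $\#(X^{-1}(T))\ge\#(X^{-1}(L))$.}

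First I would dispose of the transverse intersections. Split $X^{-1}(L)=\big(X^{-1}(L)\setminus Q_L\big)\cup\big(X^{-1}(L)\cap Q_L\big)$, where $Q_L=\{p\in M:N(p)\bot L\}$. At a point $p_i\notin Q_L$ the line $L$ is transverse to the immersed surface, so $X$ is a local embedding onto a graph that $L$ crosses transversally; by the implicit function theorem every line $T$ sufficiently near $L$ has exactly one preimage close to $p_i$, and points arising from distinct $p_i$ are distinct. Here properness of $X$ (Jorge--Meeks) guarantees that these local solutions do not run off to infinity and that no limit preimage is lost; any \emph{extra} preimages of $T$ only help the desired inequality. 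After a preliminary small rotation I would also arrange the direction of $L$ to satisfy (a) and (b), i.e. to make $Q_L$ compact with $K\ne0$ on it, which costs nothing since the excluded directions are perpendicular to the finitely many limit normals $N(E)$ or to the finitely many critical values of $N\colon\Sigma\to\s^2$, hence form a measure-zero set; transverse points persist under this rotation too.

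The crux is the tangential set $X^{-1}(L)\cap Q_L$. Because $X$ is minimal, $K\le 0$, and at a point of $Q_L$ with $K<0$ the surface is a saddle whose tangent lines — other than the two asymptotic ones, which are excluded since $L\not\subset X(M)$ — meet it with contact of order exactly $2$. Consequently, pushing the line across the corresponding ``tangent locus'', a codimension-one hypersurface through $L$ in the $4$-dimensional manifold $\Gcal$, creates either $2$ or $0$ nearby intersection points according to the side. I would fix a generic perturbation direction $w$ transverse to all these finitely many hypersurfaces; then for each tangency exactly one of the two moves $L\pm\epsilon w$ lands on its two-point (favorable) side. A parity count now finishes the argument: since each tangency is favorable for precisely one of $+w,-w$, one of the two signs is favorable for at least half of the tangential points, so moving $L$ in that direction produces at least $2\cdot\tfrac12\#(X^{-1}(L)\cap Q_L)=\#(X^{-1}(L)\cap Q_L)$ intersections near the tangencies, on top of the one-for-one transverse points. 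The resulting nearby line, perturbed slightly further within the open favorable region so as to satisfy condition (c), lies in $\Gcal_0$ and still has $\#(X^{-1}(T))\ge\#(X^{-1}(L))$, since all the inequalities used are open conditions.

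I expect the genuine obstacle to be exactly this tangential analysis, and in particular the \emph{degenerate} tangencies: points $p_i\in Q_L$ where $K=0$ (flat points, i.e. branch points of the Gauss map, of which there are finitely many) or where the contact order exceeds $2$. The safeguard is that $L\not\subset X(M)$ forces the order of contact to be finite, and, the surface being minimal, to be even, so the local picture is still governed by a ``$2\ell$ or $0$'' alternative across a single separating hypersurface; the parity/``choose the better half'' argument then applies verbatim. Establishing this even-contact dichotomy cleanly — rather than merely the existence of nearby transverse lines, which by itself does not control whether intersections appear or disappear — is the step that requires care, and it is precisely where the three defining conditions of $\Gcal_0$ (compactness of $Q_T$, nonvanishing of $K$ on it, and $X^{-1}(T)\cap Q_T=\varnothing$) are really being used.
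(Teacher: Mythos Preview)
Your approach and the paper's are the same in outline: perturb $L$ to a nearby line $T\in\Gcal_0$ with $\#(X^{-1}(T))\ge\#(X^{-1}(L))$, and apply the bound for $T$. The paper's entire argument is four lines: choose disjoint compact neighborhoods $U_p$ of the points $p\in X^{-1}(L)$, and simply \emph{assert} that $\Gcal_1=\{T\in\Gcal_0:T\cap X(U_p)\neq\varnothing\text{ for all }p\}$ is nonempty with $L$ in its closure. Your parity argument is precisely the missing justification for that assertion in the presence of tangential preimages; in that sense your write-up is more complete than the paper's, not different from it.

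Two small points. First, your two-stage ``rotate to get (a),(b), then translate to get (c)'' is awkward: a rotation of $L$ that fixes none of the tangential images $X(p_i)$ can already lose those preimages before you ever get to the parity step. It is cleaner to run the parity argument directly in the full $4$-dimensional $\Gcal$ with a single generic $w$ (transverse to the finitely many bad hypersurfaces for (a),(b),(c) simultaneously), rather than separating direction and position. Second, your claim that the contact order of $L$ with a minimal surface is always even is not correct at flat points: if the leading term of the height function is $\mathrm{Re}(w^m)$ with $m\ge3$, the restriction to a generic tangent direction vanishes to order exactly $m$, which may be odd. This does not hurt you, though: an odd-order contact behaves like a transverse point under a generic push (one real solution on each side), so such tangencies are ``always favorable'' and only strengthen your count. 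With these two tweaks the argument is clean.
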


\begin{proof} 
Suppose for a moment that under the assumptions  {\rm a)}, {\rm b)}, and {\rm c)}  the conclusion of the theorem holds. By this assumption, 
\begin{equation}\label{eq:T}
	\text{$\# \big(X^{-1}(T)\big)\leq 6 {\rm Deg}(N)+\chi(M)$ holds for every $T\in \Gcal_0$.}
\end{equation}
Let us show that the same inequality occurs for an arbitrary $L\in\Gcal$ with $L\not\subset X(M)$. Indeed, choose pairwise disjoint compact neighborhoods $U_p$ of each $p\in X^{-1}(L)\subset M$ in $M$, and  notice that $\Gcal_1=\{T\in \Gcal_0\colon T\cap X(U_p)\neq \varnothing \, \;\forall p\in  X^{-1}(L)\}$ is a non-empty subset of  $\Gcal$ whose closure contains $L$. Since $\# \big(X^{-1}(L)\big)\le \# \big(X^{-1}(T)\big)$ for all $T\in \Gcal_1$, \eqref{eq:T} implies that $\# \big(X^{-1}(L)\big)\leq 6 {\rm Deg}(N)+\chi(M)$.
\end{proof}

To complete the proof it therefore remains to prove the theorem assuming that $L\in\Gcal_0$. We proceed with that.

Up to a rigid motion, we may suppose that $L\in\Gcal_0$ is the $x_3$-axis.
Write $X=(X_j)_{j=1,2,3}$ and call $\vec e_3=(0,0,1)$. It follows from {\rm a)}, {\rm b)}, and {\rm c)} that   $\langle N,\vec e_3\rangle\neq 0$ everywhere on $X^{-1}(L)$, $0$ is a regular value of $\langle N,\vec e_3\rangle\colon M\to\r$, and $Q:= Q_L= \langle N,\vec e_3\rangle^{-1}(0)$ is a compact subset in $M=\Sigma \setminus E$  consisting of finitely many pairwise disjoint regular analytical Jordan curves. We are using for the last assertion that the degree of $N$ is finite.

\begin{remark} \label{re:tv}
Notice that the tangent vector of any  curve  in $X(Q)$ may be parallel to $(0,0,1)$ at some points, and so the projection of the curve into any horizontal plane could contain cusp points (see Subsec.\ \ref{ss:cusp}). Indeed, by solving suitable Bj\"orling problems one can easily construct minimal surfaces in $\r^3$ containing closed regular analytical curves  with horizontal Gauss map and vertical tangent vector at some points (see Figure \ref{fig:basis}).  
 \begin{figure}[ht]
    \begin{center}
 \scalebox{0.3}{\includegraphics{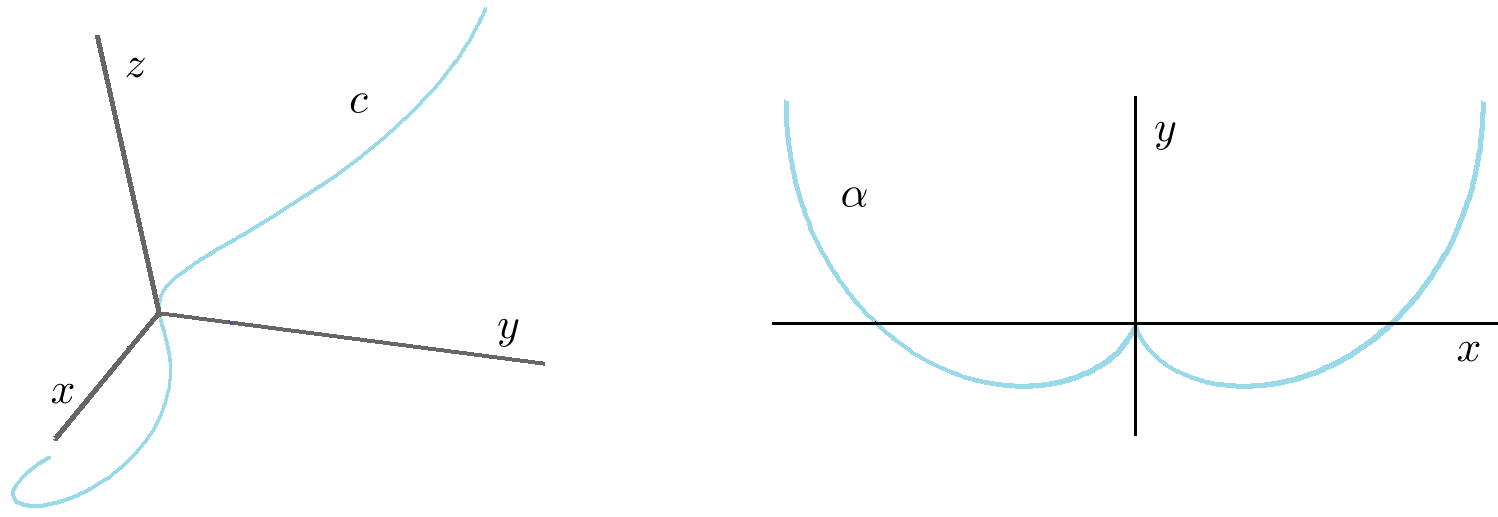}}
     \end{center}
\caption{A regular analytic curve $c$ in $\r^3=\r^2\times \r$ whose projection  $\alpha\subset \r^2$ produces a cusp point at the origin.}
\label{fig:basis}
\end{figure}
However, by analyticity, the tangent vector in $X(Q)$ is parallel to $(0,0,1)$ at most on finitely many points; otherwise    the curve would be a vertical straight line, hence non-compact. 
\end{remark}

Let $\Omega$ be a connected component of $\Sigma\setminus Q$. Obviously $b\Omega:=\overline\Omega\setminus\Omega\subset Q$  contains no point of $X^{-1}(L)\cup E$, by properties {\rm a)} and {\rm c)}.
Write $X^{-1}(L)\cap \overline \Omega=\{p_1,\ldots,p_s\}\subset    \Omega\setminus  E = \overline \Omega\setminus (Q\cup E)$ and  choose  pairwise disjoint   compact discs $D_1,\ldots, D_s$  in $  \Omega\setminus  E$ satisfying the following requirements:
\begin{itemize}
\item  $D_j$ contains $p_j$ as interior point. 
\item $(X_1,X_2)|_{D_j}$ is one to one; take into account that the Gauss map of $X|_{D_j}$  is never horizontal and choose $D_j$ small enough.
\end{itemize}

Likewise, write $ E\cap   \overline \Omega =\{q_1,\ldots,q_r\}\subset \Omega$ (possibly  $E\cap   \overline \Omega=\varnothing$ and $r=0$) and as above take pairwise disjoint   compact discs $U_1,\ldots, U_r$  in $\Omega$ such that  
\begin{itemize}
\item  $U_j$ contains $q_j$ as interior point and $U_j\setminus \{q_j\}\subset \Omega\setminus   \bigcup_{j=1}^s D_j $. 
\item $(X_1,X_2)|_{U_j}$ is an  $I_{q_j}$-sheeted multigraph   over $\r^2\setminus B$, where $I_{q_j}\geq 1$ and $B$ is  an open Euclidean disc $B$  not depending on $j$.
\end{itemize}
Such discs exist by the well-known asymptotic behavior of the ends of complete minimal surfaces with finite total curvature in $\r^3$; see Subsec.\ \ref{ss:FTC} and Jorge and Meeks \cite{JorgeMeeks1983T}.

Let $c_1,\ldots,c_m$ denote the family of pairwise disjoint Jordan curves in $Q\cap \overline \Omega = b \Omega$. 
Denote by $d_j=bD_j :=D_j\setminus \mathring D_j$,  $j=1,\ldots,s$,   $u_j=bU_j:=U_j\setminus \mathring U_j$, $j=1,\ldots,r$,  where the symbol $\mathring\;$ denotes topological interior in $\Sigma$. Consider the compact region in $\Sigma$
\[
	\Omega_0:=\overline \Omega\setminus \big((\bigcup_{j=1}^s \mathring D_j) \cup (\bigcup_{j=1}^r \mathring U_j) \big),
\]
and observe that $\Omega_0\subset M$.
Introduce the continuous map 
\[
f\colon \Omega_0\to \s^1, \quad f(p):= \frac{(X_1,X_2)}{\|(X_1,X_2)\|},
\]
and consider the induced group morphism between the first homology groups
\[
f_*\colon \Hcal_1(\Omega_0,\z)\to \Hcal_1(\s^1,\z)\equiv\z.
\]
Note that
\[
	b\Omega_0=(\bigcup_{j=1}^m c_j)  \cup (\bigcup_{j=1}^s d_j) \cup (\bigcup_{j=1}^r u_j)
\]
and endow the curves $c_1,\ldots,c_m$, $d_1,\ldots,d_s$,  $u_1,\ldots, u_r$ with the {\em orientation induced by the region $\Omega_0\subset M$}. It follows that  $(\sum_{j=1}^m c_j)+(\sum_{j=1}^s d_j)+(\sum_{j=1}^r u_j)=0$ in $ \Hcal_1(\Omega_0,\z)$,  and so
\begin{equation}\label{eq:gradocur}
\sum_{j=1}^s f_*(d_j)=- \sum_{j=1}^r f_*(u_j)-\sum_{j=1}^m f_*(c_j)\in\z.
\end{equation}

 Since $(X_1,X_2)|_{D_j}$ is one to one, the winding number  of $f(d_j)$ with respect to the origin  is equal to $\pm 1$. Further, since $\bigcup_{j=1}^s D_j\subset\Omega$ and the Gauss map of $X|_\Omega$ assumes values in a hemisphere, the sign depends on the fixed orientation in $\Omega_0$ but not on $j\in \{1,\ldots,s\}$.  In other words,  $f_*(d_1)=\ldots=f_*(d_s)=\pm 1$, and so    
\begin{equation}\label{eq:gradocur1}
|\sum_{j=1}^s f_*(d_j)|=s=\#(X^{-1}(L)\cap \overline \Omega).
\end{equation}
Likewise $f_*(u_j)=\pm I_{q_j}$ for all $j\in \{1,\ldots,r\}$, where the sign does not depend on $j$, hence 
\begin{equation}\label{eq:gradocur2}
|\sum_{j=1}^r f_*(u_j)| = \sum_{j=1}^r I_{q_j}.
\end{equation}

Let us obtain an estimation of $|\sum_{j=1}^m f_*(c_j)|$.

For each $j\in\{1,\ldots,m\}$ consider the closed planar  oriented  curve $\alpha_j:=(X_1,X_2)(c_j)$.

 From Remark \ref{re:tv}, the analytical curve $\alpha_j$ is   piecewise regular.
 Moreover, $\alpha_j$ admits a regular normal field which,  up to the identification $\s^1\equiv \s^2\cap\{x_3=0\}$,  coincides with  $\pm N|_{c_j}$. Therefore, if we call $w_j$ and $t_j$ the winding number with respect to the origin and the turning number of $\alpha_j$, respectively,   Proposition \ref{pro:curve} gives that  $2t_j\geq |w_j|$. Since $\alpha_j$ and $f(c_j)=\alpha_j/\|\alpha_j\|$ have the same winding number with respect to the origin, then $w_j= f_*(c_j),$ and so  
\begin{equation}\label{eq:45}
	|\sum_{j=1}^m f_*(c_j)|\le \sum_{j=1}^m |w_j| \le  \sum_{j=1}^m 2t_j.
\end{equation}
On the other hand, we know that $N(\overline{\Omega})$  is either the  Northern  or Southern closed hemisphere. Furthermore, 
   $ N|_{\overline{\Omega}}\colon \overline{\Omega}\to N(\overline{\Omega}) $ is a finite branched covering of degree ${\rm Deg}(N|_{\overline \Omega})\le{\rm Deg}(N)$. Since   the normal vector field to the planar curve  $\alpha_j $ is a regular map that coincides,  up to the sign,  with  $N|_{c_j}$,   $t_j$ is equal to the topological degree of   $N|_{c_j}\colon c_j \to \s^1$, $j=1,\ldots,m$, and $\sum_{j=1}^m t_j= {\rm Deg}(N|_{\overline \Omega})$. In view of \eqref{eq:45}, we infer that
\begin{equation}\label{eq:gradocur3}
|\sum_{j=1}^m f_*(c_j)|\leq 2{\rm Deg}(N|_{\overline\Omega}).
\end{equation} 
 By using equations \eqref{eq:gradocur}, \eqref{eq:gradocur1}, \eqref{eq:gradocur2}, and \eqref{eq:gradocur3}, we get that
\[ 
\#(X^{-1}(L)\cap \overline \Omega)\leq \sum_{j=1}^r I_{q_j}+2{\rm Deg}(N|_{\overline\Omega}).
\]

Joining together this information for all components  $\Omega$ of $\Sigma\setminus Q$, and taking into account that each Jordan curve in $Q$ lies in the boundary of exactly two of these components, we get that
\begin{equation}\label{eq:fin}
 \#(X^{-1}(L))\leq \sum_{q\in E} I_{q}+4{\rm Deg}(N).
\end{equation}
 On the other hand, Jorge-Meeks formula \eqref{eq:j-m} says that  $\sum_{q\in E} I_{q}=2{\rm Deg}(N)+\chi (M)$, hence from \eqref{eq:fin},
  $\# \big(X^{-1}(L)\big)\leq  
  6 {\rm Deg}(N)+\chi(M) $. This concludes the proof.
\end{proof}

\begin{proof}[Proof of Theorem \ref{th:main-intro-2}]
Take integers $r\ge 1$ and $2-2r\le m\le 1$.

Choose two non-parallel coplanar affine lines $L_1$ and $L_2$ making an angle of $2 \pi a$, $a\notin \Q$, at $x_0:=L_1\cap L_2$.
 Choose $C_j\subset L_j$ such that $x_0\in C_j$ and 
 \begin{equation}\label{eq:CC}
 	\# C_j=6r+m+1,\quad j=1,2,
\end{equation}
and set $A_{r;m}:=C_1\cup C_2$. We claim that $A_{r;m}$ satisfies the conclusion of the theorem. Indeed, it is clear that $\# A_{r;m}=12r+2m+1$. Reason by contradiction and suppose that there is $X\colon M\to \r^3$ in $\bigcup_{k\le m} \Zscr_{r;k}$ with $A_{r;m}\subset X(M)$ (see Definition \ref{de:espacios}). In particular, $X$ is nonflat. By \eqref{eq:CC}, Theorem \ref{th:against} yields that $L_1\cup L_2\subset X(M)$ (in particular, $X(M)$ is not a catenoid; the catenoid does not contain any affine line), hence by Schwarz's reflection principle $X(M)$ is invariant under the reflection $\Rcal_j\colon \r^3\to \r^3$ about $L_j$, $j=1,2$. Since $a\notin \Q$, the surface $X(M)$ is invariant under an infinite group of symmetries (the one generated by $\Rcal_j$, $j=1,2$). Since $X$ is of FTC and $X(M)$ is not a catenoid, Proposition \ref{pro:isomet} implies that $X$ is flat, a contradiction.
\end{proof}

\begin{corollary}\label{co:against}
Let $r$ and $m$ be as in Theorem \ref{th:main-intro-2}. There is a set $A^*_{r;m}\subset\r^3$, consisting of $12r +2m+2$ points,  such that if $X\colon M\to\r^3$ is a complete orientable immersed minimal surface with empty boundary and with $\chi(M)\le m$ and  $A^*_{r;m}\subset X(M)$, then the absolute value of the total curvature $|{\rm TC}(X)|>4\pi r$.
\end{corollary}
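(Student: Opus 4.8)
The plan is to deduce Corollary \ref{co:against} from Theorem \ref{th:main-intro-2} by adding a single point to the set $A_{r;m}$, chosen so as to eliminate the only two possibilities left open by the theorem's bound. Recall that Theorem \ref{th:main-intro-2} produces a set $A_{r;m}$ of $12r+2m+1$ coplanar points which is against $\bigcup_{k\le m}\Zscr_{r;k}$. If $X\colon M\to\r^3$ is a complete nonflat orientable immersed minimal surface with $\chi(M)\le m$ and $A_{r;m}\subset X(M)$, then by Osserman's theorem $|{\rm TC}(X)|=4\pi{\rm Deg}(N)$ is a nonnegative integer multiple of $4\pi$; since $A_{r;m}$ is against $\bigcup_{k\le m}\Zscr_{r;k}$, we cannot have $|{\rm TC}(X)|\le 4\pi r$, so in fact $|{\rm TC}(X)|\ge 4\pi(r+1)>4\pi r$ automatically. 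Thus the set $A_{r;m}$ \emph{already} forces $|{\rm TC}(X)|>4\pi r$; the content of the corollary is merely to record this with one extra point of slack, presumably to keep the statement uniform or to rule out a degenerate flat case.

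First I would set $A^*_{r;m}:=A_{r;m}\cup\{y_0\}$, where $A_{r;m}=C_1\cup C_2$ is the configuration built in the proof of Theorem \ref{th:main-intro-2} (so $x_0=L_1\cap L_2$ and $\#C_j=6r+m+1$), and where $y_0\in\r^3$ is a point chosen off the affine plane $\Pi$ spanned by $L_1\cup L_2$. Then $\#A^*_{r;m}=12r+2m+1+1=12r+2m+2$, as required, and the affine span of $A^*_{r;m}$ is all of $\r^3$ rather than a plane. Next I would argue by contradiction: suppose $X\colon M\to\r^3$ is complete orientable immersed minimal with empty boundary, $\chi(M)\le m$, $A^*_{r;m}\subset X(M)$, and $|{\rm TC}(X)|\le 4\pi r$. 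Since $y_0\notin\Pi$ and $A_{r;m}\subset\Pi\cap X(M)$, the surface $X$ cannot be flat (a flat complete minimal surface is a plane, whose image is a single affine plane and cannot contain both the planar set $A_{r;m}$ and the off-plane point $y_0$). Therefore $X\in\Zscr_{r;k}$ for some $k\le m$, and $A_{r;m}\subset X(M)$; but this directly contradicts the fact, from Theorem \ref{th:main-intro-2}, that $A_{r;m}$ is against $\bigcup_{k\le m}\Zscr_{r;k}$.

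The role of the extra point $y_0$ is precisely to discharge the flat case without invoking Proposition \ref{pro:isomet} again, making the planarity obstruction trivial and self-contained. I expect no genuine obstacle here: the only subtlety is the bookkeeping that ensures $y_0$ lies off $\Pi$ and hence cannot coincide with or lie in the span of the planar configuration, which forces nonflatness; everything else is an immediate appeal to Theorem \ref{th:main-intro-2}. If one instead wished to keep the configuration coplanar, the point $y_0$ could be chosen on $L_1$ or $L_2$ to increase the forced crossing count, but then ruling out flatness would require the separate argument that a plane contains $A_{r;m}$ yet the total-curvature bound still fails; placing $y_0$ off $\Pi$ is the cleaner route and is the one I would carry out.
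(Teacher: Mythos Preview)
Your proof is correct and matches the paper's approach exactly: the paper's proof is the single sentence ``To construct $A^*_{r;m}$ it suffices to add to $A_{r;m}$ a point not contained in the affine plane generated by $A_{r;m}$,'' and you have spelled out precisely why this works, namely that the off-plane point rules out the flat case so that Theorem \ref{th:main-intro-2} applies directly.
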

\begin{proof} To construct $A^*_{r;m}$ it suffices to add to $A_{r;m}$ a point not contained in the affine plane generated by $A_{r;m}$.
\end{proof}

\begin{corollary} \label{co:Z3}
Let $\Fcal$ be a family of affine lines in $\r^3$ such that the reflections about the lines in $\Fcal$ generate an infinite group of rigid motions. For each $L\in \Fcal$ choose an  infinite subset $A_L\subset L$ and set $A:=\bigcup_{L\in \Fcal} A_L$. Then, the set $A$ is against the family of all complete nonflat  minimal surfaces in $\r^3$ with finite total curvature; i.e., there is no such surface containing $A$.

In particular, $\z^3$ is against the mentioned family of surfaces.
\end{corollary}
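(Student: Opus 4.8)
The plan is to argue by contradiction, closely following the final paragraph of the proof of Theorem \ref{th:main-intro-2} but using the infinitude of the sets $A_L$ to bypass Theorem \ref{th:against} entirely. Suppose, contrary to the claim, that there is a complete nonflat minimal surface $X\colon M\to\r^3$ of finite total curvature with $A\subset X(M)$. First I would invoke analyticity: for each $L\in\Fcal$ the intersection $X(M)\cap L$ contains the infinite set $A_L$, and, as recalled just before Proposition \ref{pro:isomet}, any affine line meeting $X(M)$ in infinitely many points must be contained in $X(M)$. Hence $L\subset X(M)$ for \emph{every} $L\in\Fcal$.

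Next, Schwartz's reflection principle shows that $X(M)$ is invariant under the reflection $\Rcal_L$ about each such line $L$, and therefore under the whole group generated by $\{\Rcal_L\colon L\in\Fcal\}$. By hypothesis this group is infinite, so $X$ possesses an infinite group of symmetries. Applying Proposition \ref{pro:isomet} (after passing, if necessary, to the orientable two-sheeted cover so that its orientability assumption is met; the reflections lift and the symmetry group stays infinite), $X$ must be a plane or a catenoid. Since $X$ is nonflat it would have to be a catenoid; but the catenoid contains no affine line, while $\Fcal\neq\varnothing$ (being the generating set of an infinite group) provides a line $L\subset X(M)$. This contradiction proves that no such surface contains $A$.

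The only real subtlety I expect is the orientability hypothesis of Proposition \ref{pro:isomet}; the lattice geometry and the symmetry bookkeeping are routine once the analytic-continuation step is in place. Indeed, the whole argument hinges on the single observation that infinitely many collinear points of $X(M)$ force the entire line into $X(M)$, which is what makes the infinite sets $A_L$ so rigid.

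For the particular assertion about $\z^3$, it suffices to exhibit a suitable $\Fcal$. I would take the two parallel lines $L_1=\{(t,0,0)\colon t\in\r\}$ and $L_2=\{(t,1,0)\colon t\in\r\}$. Each meets the lattice in an infinite set, so $A_{L_1}=\{(n,0,0)\colon n\in\z\}$ and $A_{L_2}=\{(n,1,0)\colon n\in\z\}$ are infinite subsets of $\z^3$. Reflection about a line in $\r^3$ is the half-turn about that line, and the composite $\Rcal_{L_2}\circ\Rcal_{L_1}$ of two half-turns about parallel axes is the translation by $(0,2,0)$, which generates an infinite group. Thus $\Fcal=\{L_1,L_2\}$ satisfies the hypotheses and $A=A_{L_1}\cup A_{L_2}\subset\z^3$ is against the family; since any surface containing $\z^3$ contains $A$, the set $\z^3$ itself is against the family.
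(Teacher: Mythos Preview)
Your argument is correct and follows the same skeleton as the paper's proof: assume $A\subset X(M)$, force every $L\in\Fcal$ into $X(M)$, invoke Schwarz reflection to get an infinite symmetry group, and finish with Proposition~\ref{pro:isomet} plus the observation that catenoids contain no lines. The one genuine difference is in how you force $L\subset X(M)$: the paper cites Theorem~\ref{th:against} (the finite bound on $\#X^{-1}(L)$), whereas you appeal directly to the analyticity remark stated just before Proposition~\ref{pro:isomet}. Your route is a small simplification here, since the infinitude of $A_L$ makes the quantitative bound of Theorem~\ref{th:against} unnecessary; the paper's route, on the other hand, keeps the corollary visibly tied to its main intersection estimate. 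Your explicit verification for $\z^3$ via two parallel lattice lines (whose composed half-turns give a nontrivial translation) is more detailed than the paper, which leaves that particular case to the reader. The aside about passing to an orientable double cover is harmless but not needed under the paper's standing conventions.
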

\begin{proof} 
Reason by contradiction and suppose that there is a complete minimal surface with  $X\colon M\to \r^3$ such that $A\subset X(M)$. By Theorem \ref{th:against}, $\bigcup_{L\in \Fcal} L\subset X(M)$, hence $X(M)$ is invariant by the group of rigid motions generated by the reflections about these lines. Since the catenoid contains no affine lines,  Proposition \ref{pro:isomet} shows that $X$ is flat, a contradiction.
\end{proof}

It is perhaps worth mentioning that, by the results in \cite{AlarconCastro2017}, there are complete minimal surfaces in $\r^3$ containing a set $A$ as in the corollary. By Corollary \ref{co:Z3}, that surfaces are of infinite total curvature.


\subsection*{Acknowledgements}
The authors were partially supported by the State Research Agency (SRA) and European Regional Development Fund (ERDF) via the grants no. MTM2014-52368-P and MTM2017-89677-P, MICINN, Spain.
They wish to thank an anonymous referee for valuable suggestions which led to an improvement of the exposition.



\bigskip

\noindent Antonio Alarc\'{o}n

\noindent Departamento de Geometr\'{\i}a y Topolog\'{\i}a e Instituto de Matem\'aticas (IEMath-GR), Universidad de Granada, Campus de Fuentenueva s/n, E--18071 Granada, Spain.

\noindent  e-mail: {\tt alarcon@ugr.es}

\bigskip

\noindent Ildefonso Castro-Infantes

\noindent Departamento de Geometr\'{\i}a y Topolog\'{\i}a e Instituto de Matem\'aticas (IEMath-GR), Universidad de Granada, Campus de Fuentenueva s/n, E--18071 Granada, Spain.

\noindent  e-mail: {\tt icastroinfantes@ugr.es}

\bigskip

\noindent Francisco J.\ L\'opez

\noindent Departamento de Geometr\'{\i}a y Topolog\'{\i}a e Instituto de Matem\'aticas (IEMath-GR), Universidad de Granada, Campus de Fuentenueva s/n, E--18071 Granada, Spain.

\noindent  e-mail: {\tt fjlopez@ugr.es}

\end{document}